\newtheorem{thm}{Theorem}[section]
\newtheorem{prop}[thm]{Proposition}
\newtheorem{define}[thm]{Definition}
\newtheorem{lemma}[thm]{Lemma}
\newcommand{\p}{\partial}
\newcommand{\om}{\omega}
\newcommand{\Dd}{\Delta}
\numberwithin{equation}{section}
\subjclass[2000]{35Q35, 35B35, 35B65, 76D03}
\keywords{MHD equations, partial dissipation, classical solutions, global regularity}
\begin{document}
\title[MHD equations with Magnetic Diffusion]
{The 2D Incompressible Magnetohydrodynamics Equations with only Magnetic Diffusion}

\author[C. Cao, J. Wu, B. Yuan]{Chongsheng Cao$^{1}$, Jiahong Wu$^{2}$, Baoquan Yuan$^{3}$}
\address{$^1$ Department of Mathematics,
Florida International University,
Miami, FL 33199, USA}

\email{caoc@fiu.edu}

\address{$^2$ Department of Mathematics,
Oklahoma State University,
401 Mathematical Sciences,
Stillwater, OK 74078, USA}

\email{jiahong@math.okstate.edu}

\address{$^3$ School of Mathematics and Information Science, Henan Polytechnic University,
Jiaozuo City, Henan  454000, P. R. China}

\email{bqyuan@hpu.edu.cn}

\vskip .2in
\begin{abstract}
This paper examines the global (in time) regularity of classical solutions to the 2D
incompressible magnetohydrodynamics (MHD) equations with only magnetic diffusion. Here the magnetic
diffusion is given by the fractional Laplacian operator $(-\Delta)^\beta$. We establish the
global regularity for the case when $\beta>1$. This result significantly improves previous work which
requires $\beta>\frac32$ and brings us closer to the resolution
of the well-known global regularity problem on the 2D MHD equations with standard Laplacian
magnetic diffusion, namely the case when $\beta=1$.
\end{abstract}

\maketitle

\section{Introduction}

This paper focuses on the initial-value problem (IVP) for the two-dimensional (2D) incompressible
magneto-hydrodynamics (MHD) equations with fractional Laplacian magnetic diffusion
\begin{equation} \label{MHDb}
\begin{cases}
\p_t u + u\cdot\nabla u  = -\nabla p + b\cdot\nabla b,  \quad x\in \mathbb{R}^2,\,t>0, \\
\p_t b + u\cdot\nabla b +  (-\Dd)^\beta b = b\cdot\nabla u,  \quad x\in \mathbb{R}^2,\,t>0, \\
\nabla \cdot u =0, \quad \nabla \cdot b =0, \quad x\in \mathbb{R}^2,\,t>0, \\
u(x,0) =u_0(x), \quad b(x,0) =b_0(x), \quad x\in \mathbb{R}^2,
\end{cases}
\end{equation}
where the fractional Laplacian operator $(-\Delta)^\beta$ is defined through the Fourier transform
$$
\widehat{(-\Delta)^\beta \,f} (\xi) = |\xi|^{2\beta} \, \widehat{f}(\xi)
$$
with $\widehat{f}$ being the Fourier transform of $f$, namely
$$
\widehat{f}(\xi) = \int_{\mathbb{R}^2}  e^{-i x\cdot \xi} \, f(x)\, dx.
$$
When $\beta=1$, (\ref{MHDb}) reduces to the MHD equations with Laplacian magnetic diffusion, which models
many significant phenomena such as the magnetic reconnection in astrophysics and geomagnetic dynamo in geophysics
(see, e.g., \cite{Pri}).

\vskip .1in
What we care about here is the global regularity problem, namely whether the solutions of (\ref{MHDb})
emanating from sufficiently smooth data $(u_0, b_0)$ remain regular for all time. The main result of this
paper states that (\ref{MHDb}) with any $\beta>1$ always possesses a unique global solution. More precisely,
we have the following theorem.
\begin{thm} \label{main}
Consider (\ref{MHDb}) with $\beta>1$. Assume that $(u_0, b_0)\in H^s(\mathbb{R}^2)$ with $s>2$,
$\nabla\cdot u_0=0$, $\nabla\cdot b_0=0$ and
$j_0=\nabla \times b_0$ satisfying
$$
\|\nabla j_0\|_{L^\infty} <\infty.
$$
Then (\ref{MHDb}) has a unique global solution $(u, b)$ satisfying, for any $T>0$,
$$
(u,b) \in L^\infty([0,T]; H^s(\mathbb{R}^2)), \quad \nabla j \in L^1([0,T]; L^\infty(\mathbb{R}^2))
$$
where $j=\nabla\times b$.
\end{thm}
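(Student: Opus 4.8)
The plan is to prove the theorem by combining a standard local existence result with a continuation criterion and then establishing the needed global-in-time \emph{a priori} estimates. By a routine mollification/energy argument, (\ref{MHDb}) with $(u_0,b_0)\in H^s$, $s>2$, has a unique solution on a maximal interval $[0,T^*)$ with $(u,b)\in C([0,T^*);H^s)$, and this solution extends past any finite $T$ as soon as $\int_0^T(\|\na u\|_{L^\infty}+\|\na b\|_{L^\infty})\,dt<\infty$. Since $\|\na u\|_{L^\infty}$ and $\|\na b\|_{L^\infty}$ are dominated, via logarithmic/interpolation inequalities, by $\|\omega\|_{L^\infty}$, $\|b\|_{L^\infty}$, $\|\na j\|_{L^\infty}$ and lower-order Sobolev norms (here $\omega=\na\times u$, $j=\na\times b$), the whole problem reduces to proving, for every $T>0$, a pointwise-in-time bound on $\|b\|_{L^\infty}$ and $\|\omega\|_{L^\infty}$ together with $\na j\in L^1([0,T];L^\infty)$; this last quantity cannot be read off from $H^s$ when $2<s\le 3$, which is the reason $\|\na j_0\|_{L^\infty}<\infty$ is assumed.

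The first estimate is the energy identity: dotting the equations with $u$ and $b$, the coupling terms cancel thanks to $\na\cdot u=\na\cdot b=0$, giving $\|u(t)\|_{L^2}^2+\|b(t)\|_{L^2}^2+2\int_0^t\|\Lambda^\beta b\|_{L^2}^2\,ds=\|u_0\|_{L^2}^2+\|b_0\|_{L^2}^2$; in particular $\int_0^\infty\|\Lambda^{\beta-1}j\|_{L^2}^2\,ds<\infty$. The key low-regularity bound is at the $H^1$ level. Taking the curl of (\ref{MHDb}) one gets, in two dimensions,
\begin{align*}
\p_t\omega+u\cdot\na\omega&=b\cdot\na j,\\
\p_t j+u\cdot\na j+(-\Dd)^\beta j&=b\cdot\na\omega+G,
\end{align*}
where $G$ collects quadratic terms each carrying one derivative of $u$ and one of $b$. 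Testing with $\omega$ and $j$, the transport terms drop and the two stretching terms combine into $\int b\cdot\na(\omega j)\,dx=0$ (this uses $\na\cdot b=0$), so that $\tfrac12\ddt(\|\omega\|_{L^2}^2+\|j\|_{L^2}^2)+\|\Lambda^\beta j\|_{L^2}^2=\int Gj\,dx$. Estimating $\int Gj\lesssim\|\omega\|_{L^2}\|j\|_{L^4}^2$ (via Calder\'on--Zygmund and H\"older) and using the interpolation $\|j\|_{L^4}^2\lesssim\|\Lambda^{\beta-1}j\|_{L^2}^{2\beta-1}\|\Lambda^\beta j\|_{L^2}^{3-2\beta}$ followed by Young's inequality to absorb the dissipative factor, one arrives at
$$\ddt\big(\|\omega\|_{L^2}^2+\|j\|_{L^2}^2\big)+\|\Lambda^\beta j\|_{L^2}^2\le C\,\|\Lambda^{\beta-1}j\|_{L^2}^2\,\big(\|\omega\|_{L^2}^2+\|j\|_{L^2}^2\big)^{\frac1{2\beta-1}}.$$
Because $\beta>1$ the exponent $\tfrac1{2\beta-1}$ is $<1$ and the coefficient is integrable in time by the energy identity, so this sub-linear differential inequality gives a uniform bound on $\|\omega\|_{L^\infty_tL^2_x}$, $\|j\|_{L^\infty_tL^2_x}$ and $\int_0^t\|\Lambda^\beta j\|_{L^2}^2$; interpolation then places $\|\na j\|_{L^2}$ in $L^2_t$, and the surplus regularity of $b$ (using $b\in L^2_t\dot H^\beta$ with $\dot H^\beta\hookrightarrow L^\infty$ for $\beta>1$, or the forcing-free transport--diffusion equation satisfied by the stream function of $b$) gives $b\in L^\infty_tL^\infty_x$.

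Next one propagates $\|\na j\|_{L^\infty}$. Writing the $j$-equation in Duhamel form against $e^{-t(-\Dd)^\beta}$ and using the kernel bound $\|\na e^{-\tau(-\Dd)^\beta}f\|_{L^\infty}\lesssim\tau^{-1/\beta}\|f\|_{L^2}$ on $\mathbb{R}^2$, one obtains
$$\|\na j(t)\|_{L^\infty}\lesssim\|\na j_0\|_{L^\infty}+\int_0^t(t-s)^{-1/\beta}\big(\|b\|_{L^\infty}\|\na\omega\|_{L^2}+\|j\|_{L^4}\|\omega\|_{L^4}+\|u\|_{L^\infty}\|\na j\|_{L^2}\big)\,ds,$$
and $(t-s)^{-1/\beta}$ is locally integrable precisely because $\beta>1$. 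The factors $\|\na\omega\|_{L^2},\|\omega\|_{L^4}$ are controlled by a companion $H^2$-level estimate for $\|\na\omega\|_{L^2}^2+\|\na j\|_{L^2}^2$, which once more exploits the cancellation $\int b\cdot\na(\na\omega\cdot\na j)\,dx=0$ and Gr\"onwall with a coefficient dominated by $\|\na u\|_{L^\infty}+\|\na b\|_{L^\infty}$; these in turn are bounded via $\|\omega\|_{L^\infty}$ (log-Sobolev) and $\|\na j\|_{L^\infty}$ plus lower-order terms, while $\|\omega(t)\|_{L^\infty}\le\|\omega_0\|_{L^\infty}+\int_0^t\|b\|_{L^\infty}\|\na j\|_{L^\infty}\,ds$. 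Carrying out all of these estimates simultaneously in one continuity/Gr\"onwall argument closes the loop and produces $\na j,\na u,\na b\in L^1([0,T];L^\infty)$. With this in hand, $(u,b)\in L^\infty([0,T];H^s)$ follows from the usual $H^s$ energy estimate (Kato--Ponce commutator estimates and Gr\"onwall), and uniqueness follows from an $L^2$ estimate on the difference of two solutions.

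The main obstacle is that the velocity equation has \emph{no} dissipation, so no norm of $u$ — not even $\|\omega\|_{L^2}$ — can be controlled from that equation; all regularity of $u$ must be transferred from $b$ through the coupling. Two mechanisms make this work, and they are exactly what fixes the admissible range of $\beta$: the divergence-free cancellation $\int b\cdot\na(\omega j)\,dx=0$, which ties $\omega$ and $j$ together at a \emph{fixed} derivative level (hence, on its own, reaches only the $H^1$ level); and the surplus $\beta-1>0$ of magnetic diffusion, which simultaneously renders the differential inequality of Step~2 sub-linear (exponent $\tfrac1{2\beta-1}<1$, with an $L^1_t$ coefficient supplied by the energy identity) and makes the smoothing rate $(t-s)^{-1/\beta}$ in the Duhamel estimate integrable, so that $\|\na j\|_{L^\infty}$ — the quantity needed to close the $\omega$-transport estimate — can be propagated. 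At $\beta=1$ both mechanisms become borderline; organizing the intermediate-regularity bootstrap so that every Gr\"onwall coefficient stays integrable is the technically heaviest part of the argument.
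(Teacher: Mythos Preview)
Your overall architecture (local existence plus continuation via control of $\|\omega\|_{L^\infty}$ and $\int_0^T\|\nabla j\|_{L^\infty}$) matches the paper's, and your $L^2$/$H^1$ steps are fine. The real gap is in the paragraph where you ``close the loop.'' Your Duhamel bound for $\|\nabla j\|_{L^\infty}$ needs $\|\nabla\omega\|_{L^2}$ (and $\|u\|_{L^\infty}$, $\|\omega\|_{L^4}$); you propose to control these by an $H^2$ energy estimate whose Gr\"onwall coefficient is $\|\nabla u\|_{L^\infty}+\|\nabla b\|_{L^\infty}$; these in turn require $\|\omega\|_{L^\infty}$, which requires $\int_0^t\|\nabla j\|_{L^\infty}$. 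Tracking the dependencies gives, schematically, $\|\nabla\omega(t)\|_{L^2}^2 \le C\exp\Big(C\!\int_0^t(1+\|\omega\|_{L^\infty})\log(e+\|\nabla\omega\|_{L^2})\Big)$ with $\|\omega(t)\|_{L^\infty}\lesssim 1+\int_0^t\|\nabla j\|_{L^\infty}$ and $\|\nabla j(t)\|_{L^\infty}\lesssim 1+\int_0^t(t-s)^{-1/\beta}\|\nabla\omega(s)\|_{L^2}\,ds$. Feeding each estimate into the next produces an inequality of the type $Z'\le C\exp(C e^{CZ})$, whose solutions blow up in finite time; the ``simultaneous continuity/Gr\"onwall'' you invoke does not close. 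The essential difficulty is that the undamped transport term $u\cdot\nabla\nabla\omega$ forces $\|\nabla u\|_{L^\infty}$ into the $H^2$ estimate, and nothing in your scheme controls that quantity \emph{before} $\|\nabla j\|_{L^\infty}$ is available.

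The paper's route is designed precisely to avoid ever touching $\|\nabla\omega\|_{L^2}$ until the very end. After the $H^1$ bound it first shows $\|\omega\|_{L^q}+\|j\|_{L^q}\le C$ for $2\le q\le \frac{2}{2-\beta}$ (this needs only $\|b\|_{L^\infty}$ and $\|\Lambda^\beta j\|_{L^2}\in L^2_t$, not any second derivative of $u$). It then runs a Littlewood--Paley/paraproduct estimate directly on the $j$-equation: localizing to $\Delta_k j$, the dissipative lower bound $\int|\Delta_k j|^{q-2}\Delta_k j\,(-\Delta)^\beta\Delta_k j\gtrsim 2^{2\beta k}\|\Delta_k j\|_{L^q}^q$ buys $2\beta$ derivatives of smoothing, and every commutator/paraproduct piece is bounded using only $\|\omega\|_{L^q}$ and $\|j\|_{L^q}$ --- never $\nabla\omega$. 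This yields $j\in L^1_tB^s_{q,1}$ for $\frac2q<s<2\beta-1$, hence $\nabla j\in L^1_tL^r$ for a strictly larger range of $r$, which in turn upgrades the $L^q$ bound on $(\omega,j)$ to $L^r$; iterating finitely many times gives $\nabla j\in L^1_tL^\infty$. Only then is $\|\omega\|_{L^\infty}$ bounded, and the $H^s$ estimate follows. The missing idea in your proposal is exactly this: replace the $H^2$ bootstrap (which requires $\|\nabla u\|_{L^\infty}$) by an $L^q$/Besov bootstrap on $j$ alone that uses only $\|\omega\|_{L^q}$.
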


\vskip .1in
We now review some recent work to put our result in a proper content. Due to their prominent roles
in modeling many phenomena in astrophysics, geophysics and plasma physics, the MHD equations have been
studied extensively mathematically. G. Duvaut and J.-L. Lions constructed a global Leray-Hopf
weak solution and a local strong solution of the 3D incompressible MHD equations \cite{DL}. M. Sermange and
R. Temam further examined the properties of these solutions \cite{SeTe}. More recent work on the MHD
equations develops regularity criteria in terms of the velocity field and deals with
the MHD equations with dissipation and magnetic diffusion given by general Fourier multiplier operators
such as the fractional Laplacian operators (see, e.g.,
\cite{CMZ1,CMZ2,HeXin1,HeXin2,JiuZhao,TrYu,Wang,Wu2,Wu3,Wu4,YuanBai,Yama1,Yama2}).

\vskip .1in
Another direction that has generated considerable
interest recently is the global regularity problem on the MHD equations with partial dissipation, especially in
the 2D case (see, e.g., \cite{CaoWu,CaoReWu,Chae,JiuNiu,LZ,LinZhang1,LinZhang2,XuZhang,Zhou}).
Since the global regularity of the 2D MHD equations
with both Laplacian dissipation and magnetic diffusion is easy to obtain while the regularity of the
completely inviscid MHD equations appear to be out of reach, it is natural to examine the MHD equations
with partial dissipation. One particular partial dissipation case is (\ref{MHDb}). When $\beta\ge 1$, any solution
of (\ref{MHDb}) with $(u_0, b_0) \in H^1$ generates a global weak solution $(u,b)$ that remains bound in $H^1$
for all time (see, e.g., \cite{CaoWu,LZ}). However, it is not clear if such weak solutions are unique or their regularity
can be improved to be in $H^2$ if the initial data $(u_0, b_0)$ is in $H^2$. The result of this paper obtains
the uniqueness and global regularity for the case when $\beta>1$. Previous global regularity results
require that $\beta>\frac32$ (see, e.g., \cite{JiuZhao,TrYu,YuanBai,Yama1}).  Our approach here does not appear to be able to
extend to the borderline case when $\beta=1$, which is currently being studied by a different method \cite{Niu}.
Progress has also been made on several other partial dissipation cases of the MHD equations. F. Lin, L. Xu
and P. Zhang recently studied the MHD equations with the Laplacian dissipation in the velocity equation but without
magnetic diffusion and were able to establish the global existence of small solutions after translating
the magnetic field by a constant vector (\cite{LinZhang1,LinZhang2,XuZhang}). C. Cao and J. Wu examined the anisotrophic 2D MHD
equations with horizontal dissipation and vertical magnetic diffusion (or vertical dissipation and horizontal
magnetic diffusion) and obtained the global regularity for this partial dissipation case \cite{CaoWu}. The anisotrophic
MHD equations with horizontal dissipation and horizontal magnetic diffusion were also investigated very recently
and progress was also made (see \cite{CaoReWu}).

\vskip .1in
We now explain our proof of Theorem \ref{main}. Since the local (in time) well-posedness can be established following
a standard approach, our efforts are devoted to proving global {\it a priori} bounds for $(u,b)$ in the initial
functional setting $H^s$ with $s>2$. $(u,b)$ indeed admits a global $H^1$-bound, but direct energy estimates
do not appear to easily yield a global $H^2$-bound. The difficulty comes from the nonlinear term in the velocity
equation due to the lack of dissipation. To bypass this difficulty, we first make use of the magnetic diffusion
$(-\Delta)^\beta b$ with $\beta>1$ to show the global bound for
$\|\om\|_{L^q}$ and  $\|j\|_{L^q}$ for $2\le q\le \frac{2}{2-\beta}$ (the range is modified to $2\le q<\infty$ when $\beta=2$).
The magnetic diffusion is further exploited to obtain a global bound for $j$ in the space-time
Besov space $L^1_t B^s_{q,1}$, namely, for any $T>0$ and $t\le T$
\begin{equation}\label{jbesov}
\|j\|_{L^1_t B^s_{q,1}} \le C(T, u_0, b_0) <\infty \quad\mbox{with}\quad 2\le q\le \frac{2}{2-\beta},\quad \frac2q<s<2\beta-1.
\end{equation}
Roughly speaking, this global bound provides the time integrability of the $L^q$-norm of $j$ up to
$(2\beta-1)$-derivative. This global bound is proven through Besov space techniques. Special consequences
of this global bound include the time integrability of $\|j\|_{L^\infty}$ and of $\|\nabla j\|_{L^r}$ for $r>q$.
To gain higher regularity, we go through an iterative process. The bound $\|\nabla j\|_{L^1_t L^r} <\infty$ allows us
to further prove a global bound for $\|\om\|_{L^r}$ and $\|j\|_{L^r}$ with $r>q$, which can be employed
to prove (\ref{jbesov}) with $q$ replaced by $r$. Repeating this process leads to the global bound in (\ref{jbesov})
for any $q\in [2,\infty)$, which especially implies, for any $t>0$,
\begin{equation}\label{naj}
\int_0^t \|\nabla j\|_{L^\infty} d\tau <\infty.
\end{equation}
(\ref{naj})  can then be further used to obtain a global bound for $\|\om\|_{L^\infty}$. The time integrability of $\|j\|_{L^\infty}$
and the boundedness of $\|\om\|_{L^\infty_{t,x}}$ are enough to prove a global bound for $(u,b)$ in $H^s$.

\vskip .1in
The rest of this paper is divided into three sections. The second section provides the definition of inhomogeneous
Besov spaces and related facts such as Bernstein's inequality. The third section proves the global $L^q$-bound for
$(\omega, j)$ while the fourth section establishes the global bound in (\ref{jbesov}). The last section gains
higher regularity through an iterative process and proves Theorem \ref{main}.

\vskip .3in
\section{Functional spaces}

This section provides the definition of Besov spaces and related facts used in the subsequent sections.
Materials presented here can be found in several books and many papers
(see, e.g., \cite{BCD,BL,MWZ,RS,Tri}).

\vskip .1in
We start with several notation. $\mathcal{S}$ denotes
the usual Schwarz class and ${\mathcal S}'$ its dual, the space of
tempered distributions. It is a simple fact in analysis that there exist
two radially symmetric functions $\Psi, \Phi\in \mathcal{S}$
such that
$$
\mbox{supp} \widehat{\Psi} \subset B\left(0, \frac43\right),
\qquad \mbox{supp} \widehat{\Phi} \subset
\mathcal{A}\left(0, \frac34, \frac83\right),
$$
$$
\widehat{\Psi} (\xi) +  \sum_{j=0}^\infty \widehat{\Phi}_j (\xi) =1,
\quad \xi \in {\mathbb R}^d,
$$
where $B(0,r)$ denotes the ball centered at the origin with radius $r$, $\mathcal{A}(0,r_1,r_2)$ denotes the
annulus centered at the origin with the inner radius $r_1$ and the outer $r_2$,
$\Phi_0=\Phi$ and $\Phi_j (x) =2^{jd} \Phi_0(2^j x)$ or $\widehat{\Phi}_j(\xi) = \widehat{\Phi}_0(2^{-j} \xi)$.

\vskip .1in
Then, for any $\psi\in {\mathcal S}$,
$$
\Psi \ast \psi + \sum_{j=0}^\infty \Phi_j \ast \psi =\psi
$$
and hence
\begin{equation}\label{sf}
\Psi \ast f + \sum_{j=0}^\infty \Phi_j \ast f =f
\end{equation}
in ${\mathcal S}'$ for any $f\in {\mathcal S}'$. To define the inhomogeneous Besov space, we set
\begin{equation} \label{del2}
\Delta_j f = \left\{
\begin{array}{ll}
0,&\quad \mbox{if}\,\,j\le -2, \\
\Psi\ast f,&\quad \mbox{if}\,\,j=-1, \\
\Phi_j \ast f, &\quad \mbox{if} \,\,j=0,1,2,\cdots.
\end{array}
\right.
\end{equation}
\begin{define}
The inhomogeneous Besov space $B^s_{p,q}$ with $1\le p,q \le \infty$
and $s\in {\mathbb R}$ consists of $f\in {\mathcal S}'$
satisfying
$$
\|f\|_{B^s_{p,q}} \equiv \|2^{js} \|\Delta_j f\|_{L^p} \|_{l^q}
<\infty.
$$
\end{define}

\vskip .1in
Besides the Fourier localization operators $\Delta_j$, the partial sum $S_j$ is also a useful notation. For an integer $j$,
$$
S_j \equiv \sum_{k=-1}^{j-1} \Delta_k,
$$
where $\Delta_k$ is given by (\ref{del2}). For any $f\in \mathcal{S}'$, the Fourier transform of $S_j f$ is supported on the ball of radius $2^j$.

\vskip .1in
Bernstein's inequalities are useful tools in dealing with Fourier localized functions and these inequalities trade integrability for derivatives. The following proposition provides Bernstein type inequalities for fractional derivatives.
\begin{prop}\label{bern}
Let $\alpha\ge0$. Let $1\le p\le q\le \infty$.
\begin{enumerate}
\item[1)] If $f$ satisfies
$$
\mbox{supp}\, \widehat{f} \subset \{\xi\in \mathbb{R}^d: \,\, |\xi|
\le K 2^j \},
$$
for some integer $j$ and a constant $K>0$, then
$$
\|(-\Delta)^\alpha f\|_{L^q(\mathbb{R}^d)} \le C_1\, 2^{2\alpha j +
j d(\frac{1}{p}-\frac{1}{q})} \|f\|_{L^p(\mathbb{R}^d)}.
$$
\item[2)] If $f$ satisfies
\begin{equation*}\label{spp}
\mbox{supp}\, \widehat{f} \subset \{\xi\in \mathbb{R}^d: \,\, K_12^j
\le |\xi| \le K_2 2^j \}
\end{equation*}
for some integer $j$ and constants $0<K_1\le K_2$, then
$$
C_1\, 2^{2\alpha j} \|f\|_{L^q(\mathbb{R}^d)} \le \|(-\Delta)^\alpha
f\|_{L^q(\mathbb{R}^d)} \le C_2\, 2^{2\alpha j +
j d(\frac{1}{p}-\frac{1}{q})} \|f\|_{L^p(\mathbb{R}^d)},
$$
where $C_1$ and $C_2$ are constants depending on $\alpha,p$ and $q$
only.
\end{enumerate}
\end{prop}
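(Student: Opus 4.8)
The plan is to reduce both inequalities to a single convolution estimate dispatched by Young's inequality, after peeling off a rescaled kernel by means of a Fourier cutoff.

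For part 1), I would fix $\Theta\in C_c^\infty(\mathbb{R}^d)$ with $\Theta\equiv 1$ on $\{|\xi|\le K\}$ and $\mathrm{supp}\,\Theta\subset\{|\xi|\le 2K\}$. Since $\widehat f$ is supported in $\{|\xi|\le K2^j\}$ we have $\widehat f(\xi)=\Theta(2^{-j}\xi)\widehat f(\xi)$, so
\[
(-\Delta)^\alpha f=\mathcal{F}^{-1}\!\bigl(|\xi|^{2\alpha}\Theta(2^{-j}\xi)\bigr)*f=2^{2\alpha j}\bigl(2^{jd}G(2^j\cdot)\bigr)*f,\qquad G:=\mathcal{F}^{-1}\bigl(|\xi|^{2\alpha}\Theta(\xi)\bigr),
\]
up to harmless normalization constants. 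Young's inequality with $1+\tfrac1q=\tfrac1r+\tfrac1p$ gives $\|(-\Delta)^\alpha f\|_{L^q}\le 2^{2\alpha j}\|2^{jd}G(2^j\cdot)\|_{L^r}\|f\|_{L^p}$, and a change of variables rewrites $\|2^{jd}G(2^j\cdot)\|_{L^r}$ as $2^{jd(1/p-1/q)}\|G\|_{L^r}$, which is precisely the claimed power of $2^j$. Note $p\le q$ forces $r\in[1,\infty]$. Thus the whole statement reduces to the $j$-independent fact $G\in L^1\cap L^\infty$.

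The function $g(\xi):=|\xi|^{2\alpha}\Theta(\xi)$ is bounded with compact support, so $G\in L^\infty$ comes for free; the substance is $G\in L^1$, and this is the step I expect to be the only genuine obstacle, since $|\xi|^{2\alpha}$ fails to be smooth at the origin when $\alpha$ is not an integer. The resolution is that the origin singularity stays mild for $\alpha\ge 0$: a derivative of $|\xi|^{2\alpha}$ of (possibly fractional) order $N$ behaves like $|\xi|^{2\alpha-N}$, which remains locally square integrable as long as $N<2\alpha+\tfrac d2$, so $g\in H^N(\mathbb{R}^d)$ for every such $N$; when $\alpha>0$ one may pick $N>\tfrac d2$, and then the standard mapping property of $\mathcal{F}$ from $H^N$ into the weighted space $L^2(\mathbb{R}^d,(1+|y|)^{2N}\,dy)$, together with $(1+|y|)^{-N}\in L^2(\mathbb{R}^d)$, yields $G\in L^1$. (For $\alpha=0$ the proposition is the classical Bernstein inequality, with $G$ Schwartz.) Everything else is scaling bookkeeping.

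For part 2), the upper bound follows at once from part 1), since the annular support in particular sits inside the ball $\{|\xi|\le K_2 2^j\}$. For the lower bound I would instead choose $\widetilde\Theta\in C_c^\infty(\mathbb{R}^d)$ supported in an annulus $\{\tfrac12 K_1\le|\xi|\le 2K_2\}$ away from the origin with $\widetilde\Theta\equiv 1$ on $\{K_1\le|\xi|\le K_2\}$, so that $|\xi|^{-2\alpha}\widetilde\Theta(\xi)$ is smooth with compact support and $H:=\mathcal{F}^{-1}\bigl(|\xi|^{-2\alpha}\widetilde\Theta(\xi)\bigr)$ is Schwartz --- there is no origin singularity this time. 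From $\widehat f(\xi)=|\xi|^{-2\alpha}\widetilde\Theta(2^{-j}\xi)\,|\xi|^{2\alpha}\widehat f(\xi)$ one gets $f=2^{-2\alpha j}\bigl(2^{jd}H(2^j\cdot)\bigr)*(-\Delta)^\alpha f$, and Young's inequality in the form $L^1*L^q\to L^q$ produces $\|f\|_{L^q}\le 2^{-2\alpha j}\|H\|_{L^1}\|(-\Delta)^\alpha f\|_{L^q}$; rearranging gives the lower bound with $C_1=\|H\|_{L^1}^{-1}$.
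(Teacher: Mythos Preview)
The paper does not actually prove this proposition; it is stated as a standard tool in Section~2 with references to the textbooks \cite{BCD,BL,MWZ,RS,Tri}, and no argument is given. Your proof is correct and follows the classical route one finds in those references: write the multiplier as a rescaled convolution kernel via a smooth cutoff, apply Young's inequality, and check that the unscaled kernel lies in every $L^r$ by verifying enough Sobolev regularity of the symbol (handling the origin singularity of $|\xi|^{2\alpha}$ for the ball-supported case, and noting there is none in the annular case). The only place to be slightly more careful is your sentence about ``derivatives of possibly fractional order $N$'': in practice one checks integer-order Sobolev regularity of $|\xi|^{2\alpha}\Theta(\xi)$ up to order $\lfloor 2\alpha+d/2\rfloor$ and then interpolates, but this is cosmetic rather than a gap.
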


\vskip .3in
\section{Global $L^q$-bound for $(\omega, j)$}

This section proves a global {\it a priori} bound for $\om$ and $j$ in the Lebesgue space $L^q(\mathbb{R}^2)$
for $q$ in a suitable range. More precisely, we prove the following proposition.
\begin{prop} \label{Lq}
Assume $(u_0, b_0)$ satisfies the conditions stated in Theorem \ref{main}. Let $(u, b)$ be the
corresponding solution of (\ref{MHDb}) with $\beta>1$.  Then, for any $q$ satisfying
\begin{equation}\label{qran}
2 \le q \le \frac{2}{2-\beta}
\end{equation}
(the range of $q$ is modified to $2\le q<\infty$ when $\beta=2$), and for any $T>0$ and $t\le T$, there exists a constant
$C=C(T, u_0, b_0)$ such that
\begin{equation}\label{Lqbd}
\|\om(t)\|_{L^q} \le C,\qquad \|j(t)\|_{L^q} \le C.
\end{equation}
\end{prop}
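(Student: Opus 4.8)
The plan is to derive a closed differential inequality for $\|\om\|_{L^q} + \|j\|_{L^q}$ by writing the equations for the vorticity $\om = \na\times u$ and the current density $j = \na\times b$. Taking the curl of \eqref{MHDb}, one obtains
$$
\p_t \om + u\cdot\na\om = b\cdot\na j, \qquad \p_t j + u\cdot\na j + (-\Dd)^\beta j = b\cdot\na\om + 2\,\p_1 b\cdot\na(\p_1 u_1 + \p_2 u_2)\text{-type commutator terms},
$$
where the extra terms in the $j$-equation come from the fact that $\na\times(b\cdot\na u) - \na\times(u\cdot\na b)$ does not reduce cleanly to $b\cdot\na\om - u\cdot\na j$; schematically the remainder is a sum of products of first derivatives of $u$ and $b$, i.e. quadratic in $(\na u, \na b)$ and hence controlled in terms of $\om$, $j$ and Riesz transforms thereof. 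First I would record the global $H^1$-bound for $(u,b)$ (available from the basic energy estimate, since $\frac12\ddt(\|u\|_{L^2}^2 + \|b\|_{L^2}^2) + \|(-\Dd)^{\beta/2} b\|_{L^2}^2 = 0$, combined with the $L^2$-bound on $(\om,j)$ that follows by differentiating once and using the cancellation $\int (b\cdot\na j)\om + \int(b\cdot\na\om)j = 0$). This gives $\|\om\|_{L^2} + \|j\|_{L^2} \le C$ for all time.

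Next I would run the $L^q$ energy estimate: multiply the $\om$-equation by $|\om|^{q-2}\om$ and the $j$-equation by $|j|^{q-2}j$, integrate, and add. The transport terms $\int (u\cdot\na\om)|\om|^{q-2}\om$ vanish by $\na\cdot u = 0$, and the two "good" coupling terms $\int(b\cdot\na j)|\om|^{q-2}\om$ and $\int(b\cdot\na\om)|j|^{q-2}j$ do not cancel exactly for $q\neq 2$, but after integration by parts on $b\cdot\na$ (using $\na\cdot b = 0$) they produce terms bounded by $\int |b|\,|\na\om|\,|\om|^{q-2}|j|$ and similar — which is exactly where the difficulty concentrates. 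For the dissipative term I would invoke the positivity estimate $\int (-\Dd)^\beta j\; |j|^{q-2}j \ge c\,\|(-\Dd)^{\beta/2}(|j|^{q/2})\|_{L^2}^2 \gtrsim \|j\|_{L^{qr}}^q$ for an appropriate Sobolev exponent (by Proposition \ref{bern} / Sobolev embedding applied to $|j|^{q/2}$, using $\beta/2$ derivatives in dimension $2$ — this is precisely what forces the constraint $q \le \frac{2}{2-\beta}$, since the embedding $\dot H^{\beta/2}\hookrightarrow L^{2/(2-\beta)}$ in 2D caps the integrability gain). The commutator/remainder terms in the $j$-equation, being quadratic in $(\na u,\na b)$, are estimated by $\|\om\|_{L^q}\|j\|_{L^q}$-type products after Hölder, together with the Calderón–Zygmund bound $\|\na u\|_{L^q} \le C\|\om\|_{L^q}$.

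The main obstacle is the coupling term with $\na\om$ in it: after integration by parts it reads (schematically) $\int b\cdot\na(|\om|^{q-2}\om)\, j$, which contains $\|\na\om\|$ and cannot be absorbed by the dissipation, since there is no smoothing in the $\om$-equation. The resolution I would pursue is the standard one for this system: combine the $\om$- and $j$-estimates so that this term appears with the sign that lets it cancel against its counterpart from the $j$-equation, up to a genuinely lower-order remainder. Concretely, estimate $\ddt\big(\|\om\|_{L^q}^q + \|j\|_{L^q}^q\big) + c\|j\|_{L^{q_*}}^q \le C\,\|b\|_{L^\infty}\big(\text{l.o.t.}\big) + C\big(\|\om\|_{L^q}^q + \|j\|_{L^q}^q\big) + (\text{dissipation-absorbable terms})$, where the $\|b\|_{L^\infty}$ is controlled via $b\in H^1\hookrightarrow$ Besov and the magnetic diffusion (or handled by interpolation between the $L^2$-energy bound and the dissipation), and then close by Grönwall. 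I would expect to need a careful interpolation to trade the higher integrability gained from the dissipation against the quadratic nonlinear terms, and to keep every constant depending only on $T$, $u_0$, $b_0$ through the already-established $H^1$-bound; this balancing, more than any single inequality, is the crux of the argument.
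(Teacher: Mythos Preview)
Your proposal has a genuine gap: the cancellation strategy you are banking on does not work for $q\neq 2$, and the paper's proof proceeds quite differently.

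First, the paper does \emph{not} couple the $\omega$- and $j$-estimates. It bounds $\|\omega\|_{L^q}$ first and independently. From the $\omega$-equation one gets
\[
\frac{d}{dt}\|\omega\|_{L^q} \le \|b\|_{L^\infty}\,\|\nabla j\|_{L^q},
\]
and the right-hand side is controlled \emph{entirely} by the $H^1$-level bound of Lemma~\ref{H1}: the key interpolation is
\[
\|\nabla j\|_{L^q} \le C\,\|j\|_{L^2}^{1-\frac{2(q-1)}{\beta q}}\,\|\Lambda^\beta j\|_{L^2}^{\frac{2(q-1)}{\beta q}},
\]
which requires $\frac{2(q-1)}{\beta q}\le 1$, i.e.\ exactly $q\le\frac{2}{2-\beta}$. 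This is where the range of $q$ actually originates, not from a Sobolev gain on $|j|^{q/2}$ as you suggest. Since $\int_0^t\|\Lambda^\beta j\|_{L^2}^2\,d\tau$ is already bounded, one integrates directly---no Gr\"onwall needed for $\omega$.

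Second, in the $j$-equation the dangerous term $\int (b\cdot\nabla\omega)\,j|j|^{q-2}$ is \emph{not} handled by cancellation against anything in the $\omega$-equation. Instead one integrates by parts using $\nabla\cdot b=0$ to move the gradient onto $|j|^{q/2}$:
\[
\Big|\int (b\cdot\nabla\omega)\,j|j|^{q-2}\Big| \le C\,\|b\|_{L^\infty}\,\|\omega\|_{L^q}\,\|j\|_{L^q}^{\frac{q}{2}-1}\,\|\nabla(|j|^{q/2})\|_{L^2},
\]
and the last factor is absorbed by the dissipation via $\|\nabla(|j|^{q/2})\|_{L^2}\le \|j\|_{L^q}^{q/2}+\|\Lambda^\beta(|j|^{q/2})\|_{L^2}$. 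This uses the already-established bound on $\|\omega\|_{L^q}$. The ``cancellation up to lower-order remainder'' you hope for simply does not occur: for $q\neq 2$, $\int(b\cdot\nabla j)|\omega|^{q-2}\omega + \int(b\cdot\nabla\omega)|j|^{q-2}j$ has no useful structure.
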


\vskip .1in
To prove Proposition \ref{Lq}, we first provide two simple bounds. A simple energy estimate yields the
global $L^2$-bound of (\ref{MHDb}) with $\beta\ge 0$.
\begin{lemma} \label{L2}
Assume $(u_0, b_0)$ satisfies the conditions stated in Theorem \ref{main}. Let $(u, b)$ be the
corresponding solution of (\ref{MHDb}) with $\beta \ge 0$. Then, for any $t\ge 0$,
$$
\|u(t)\|_{L^2}^2 + \|b(t)\|_{L^2}^2 + 2 \int_0^t \|\Lambda^\beta b(\tau)\|_{L^2}^2\,d\tau
= \|u_0\|_{L^2}^2 + \|b_0\|_{L^2}^2,
$$
where $\Lambda=\sqrt{-\Delta}$.
\end{lemma}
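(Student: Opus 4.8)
The plan is to carry out the standard energy estimate for the MHD system. First I would take the $L^2$ inner product of the velocity equation in (\ref{MHDb}) with $u$ and of the magnetic equation with $b$, then add the two resulting identities. Since $(u,b)\in H^s$ with $s>2$, the solution has enough regularity and decay for all the integrations by parts below to be legitimate.

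In the velocity equation the pressure term drops out, $\int_{\mathbb{R}^2} \nabla p\cdot u\,dx = -\int_{\mathbb{R}^2} p\,(\nabla\cdot u)\,dx = 0$, and the convective term vanishes because
$$
\int_{\mathbb{R}^2} (u\cdot\nabla u)\cdot u\,dx = \frac12\int_{\mathbb{R}^2} u\cdot\nabla|u|^2\,dx = -\frac12\int_{\mathbb{R}^2} (\nabla\cdot u)\,|u|^2\,dx = 0;
$$
similarly $\int_{\mathbb{R}^2}(u\cdot\nabla b)\cdot b\,dx = 0$ in the magnetic equation. The dissipation term contributes, via Plancherel's theorem,
$$
\int_{\mathbb{R}^2} \big((-\Delta)^\beta b\big)\cdot b\,dx = \int_{\mathbb{R}^2} |\xi|^{2\beta}\,|\widehat b(\xi)|^2\,d\xi = \|\Lambda^\beta b\|_{L^2}^2.
$$
The only nonlinear contributions that remain are the two magnetic coupling terms, and integrating by parts together with $\nabla\cdot b = 0$ gives
$$
\int_{\mathbb{R}^2} (b\cdot\nabla b)\cdot u\,dx = -\int_{\mathbb{R}^2} (b\cdot\nabla u)\cdot b\,dx,
$$
so they cancel when the two identities are added.

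Putting everything together yields the differential identity
$$
\frac12\,\ddt\big(\|u(t)\|_{L^2}^2 + \|b(t)\|_{L^2}^2\big) + \|\Lambda^\beta b(t)\|_{L^2}^2 = 0,
$$
and integrating in time over $[0,t]$ produces the stated equality. There is no genuine obstacle here: the estimate is an exact identity that relies only on the divergence-free conditions and the antisymmetry of the magnetic coupling, so the one point worth recalling is that it is an \emph{a priori} estimate carried out on the smooth local solution furnished by the standard local well-posedness theory.
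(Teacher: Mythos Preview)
Your proof is correct and is precisely the standard energy estimate the paper has in mind; the paper itself does not spell out the argument beyond remarking that ``a simple energy estimate yields the global $L^2$-bound.'' Nothing needs to be added or changed.
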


In addition, by resorting to the equations of $\om$ and $j$,
\begin{equation}\label{vj}
\begin{cases}
\p_t \om + u\cdot\nabla \om  = b\cdot\nabla j,  \\
\p_t j + u\cdot\nabla j +  (-\Dd)^\beta j = b\cdot\nabla \om + 2 \partial_1b_1(\p_2 u_1 + \p_1 u_2) - 2 \partial_1u_1(\p_2 b_1 + \p_1 b_2), \\
\om(x,0) =\om_0(x), \quad \om(x,0) =j_0(x),
\end{cases}
\end{equation}
the global $H^1$-bound on $(u,b)$ can be obtained in a similar fashion as in \cite{CaoWu}.
\begin{lemma} \label{H1}
Assume $(u_0, b_0)$ satisfies the conditions stated in Theorem \ref{main}. Let $(u, b)$ be the
corresponding solution of (\ref{MHDb}) with $\beta \ge 1$. Then, for any $T>0$ and $t\le T$, there exists a constant
$C=C(T, \|(u_0, b_0)\|_{H^1})$ such that
\begin{equation} \label{H1int}
\|\om(t)\|_{L^2}^2 + \|j(t)\|_{L^2}^2 + \int_0^t \|\Lambda^\beta j(\tau)\|_{L^2}^2\,d\tau
\le C.
\end{equation}
\end{lemma}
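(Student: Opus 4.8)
The plan is to run a coupled $L^2$ energy estimate on the vorticity--current system (\ref{vj}), the crucial point being a cancellation between the two ``magnetic stretching'' terms. Multiplying the $\om$-equation by $\om$ and the $j$-equation by $j$, integrating over $\mathbb{R}^2$, and using $\na\cdot u=0$ to discard $\int u\cdot\na\om\,\om\,dx$ and $\int u\cdot\na j\,j\,dx$, one arrives at
\[
\frac12\ddt\bigl(\|\om\|_{L^2}^2+\|j\|_{L^2}^2\bigr)+\|\Lambda^\beta j\|_{L^2}^2
=\int b\cdot\na j\,\om\,dx+\int b\cdot\na\om\,j\,dx+I,
\]
where $I=\int\bigl[2\p_1 b_1(\p_2 u_1+\p_1 u_2)-2\p_1 u_1(\p_2 b_1+\p_1 b_2)\bigr]j\,dx$. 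Since $\na\cdot b=0$, the first two integrals sum to $\int b\cdot\na(\om j)\,dx=0$, so everything reduces to controlling $I$.

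For $I$ I would keep $\na u$ in $L^2$ and distribute the other two factors in $L^4$: by H\"older $|I|\le C\|\na b\|_{L^4}\|\na u\|_{L^2}\|j\|_{L^4}$. The two-dimensional Gagliardo--Nirenberg inequality $\|f\|_{L^4}\le C\|f\|_{L^2}^{1/2}\|\na f\|_{L^2}^{1/2}$, together with the Calder\'on--Zygmund estimates $\|\na u\|_{L^2}\le C\|\om\|_{L^2}$, $\|\na b\|_{L^2}\le C\|j\|_{L^2}$ and $\|\na^2 b\|_{L^2}\le C\|\na j\|_{L^2}$, then gives $|I|\le C\|j\|_{L^2}\|\om\|_{L^2}\|\na j\|_{L^2}$. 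This is where $\beta\ge1$ is used: by interpolation $\|\na j\|_{L^2}=\|\Lambda j\|_{L^2}\le\|j\|_{L^2}^{1-1/\beta}\|\Lambda^\beta j\|_{L^2}^{1/\beta}$, and Young's inequality absorbs the dissipative factor,
\[
|I|\le\tfrac12\|\Lambda^\beta j\|_{L^2}^2+C\,\|j\|_{L^2}^2\,\|\om\|_{L^2}^{\frac{2\beta}{2\beta-1}}
\le\tfrac12\|\Lambda^\beta j\|_{L^2}^2+C\,\|j\|_{L^2}^2\bigl(1+\|\om\|_{L^2}^2+\|j\|_{L^2}^2\bigr),
\]
the last step using $0<\frac{2\beta}{2\beta-1}\le2$.

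Feeding this back into the energy identity yields a differential inequality of the form $\ddt g+\|\Lambda^\beta j\|_{L^2}^2\le C\,\|j\|_{L^2}^2\,(1+g)$ with $g=\|\om\|_{L^2}^2+\|j\|_{L^2}^2$. To close it by Gr\"onwall I need $\int_0^T\|j(\tau)\|_{L^2}^2\,d\tau<\infty$, and this is exactly what the basic energy balance provides: $\|j\|_{L^2}=\|\na b\|_{L^2}=\|\Lambda b\|_{L^2}\le\|b\|_{L^2}^{1-1/\beta}\|\Lambda^\beta b\|_{L^2}^{1/\beta}$, so by Lemma \ref{L2} ($\|b(\tau)\|_{L^2}$ bounded and $\int_0^T\|\Lambda^\beta b\|_{L^2}^2\,d\tau<\infty$) and H\"older in time, $\int_0^T\|j\|_{L^2}^2\,d\tau\le C(T,\|(u_0,b_0)\|_{L^2})$. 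Gr\"onwall then bounds $g(t)$ in terms of $T$ and $\|(u_0,b_0)\|_{H^1}$ only, and integrating the inequality once more gives the claimed bound on $\int_0^t\|\Lambda^\beta j\|_{L^2}^2\,d\tau$.

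I expect the only genuinely delicate point to be the term $I$: a crude estimate produces a superlinear power of $g$ and cannot be closed by Gr\"onwall on its own. What rescues the argument, precisely when $\beta\ge1$, is the combination of two features — the interpolation exponent on $\|\om\|_{L^2}$ is $\frac{2\beta}{2\beta-1}\le2$, so the worst nonlinearity is at most quadratic in $g$, and the prefactor $\|j\|_{L^2}^2$ is time-integrable by the energy law of Lemma \ref{L2}. When $\beta=1$ the estimate is especially clean, since then $\|\Lambda^\beta j\|_{L^2}=\|\na j\|_{L^2}$ and no interpolation is needed; the general case is handled as in \cite{CaoWu}.
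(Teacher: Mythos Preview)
Your proof is correct and follows exactly the approach the paper has in mind: the paper does not spell out a proof of this lemma but refers to \cite{CaoWu}, and what you have written is precisely the argument there---the coupled $L^2$ estimate on $(\om,j)$, the cancellation $\int b\cdot\na j\,\om+\int b\cdot\na\om\,j=0$, the Gagliardo--Nirenberg estimate on the quadratic term $I$, and Gr\"onwall closed via the time-integrability of $\|j\|_{L^2}^2$ coming from Lemma~\ref{L2}. The only addition over \cite{CaoWu} is the interpolation step $\|\na j\|_{L^2}\le\|j\|_{L^2}^{1-1/\beta}\|\Lambda^\beta j\|_{L^2}^{1/\beta}$ needed for $\beta>1$, which you handle correctly.
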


\vskip .1in
\begin{proof}[Proof of Proposition \ref{Lq}]
Multiplying the first equation in (\ref{vj}) by $\om |\om|^{q-2}$, integrating in space and applying H\"{o}lder's inequality, we have
$$
\frac1q \frac{d}{dt} \|\om\|_{L^q}^q = \int b\cdot \nabla j\, \om |\om|^{q-2}
\le \|\om\|_{L^q}^{q-1}\, \|b\|_{L^\infty} \|\nabla j\|_{L^q}.
$$
Recall the simple Sobolev inequalities, for $\beta>1$,
\begin{equation}\label{bL8}
\|b\|_{L^\infty} \le C \|b\|_{L^2}^{1-\frac1{1+\beta}}\, \|\Lambda^\beta j\|_{L^2}^{\frac1{1+\beta}}, \qquad
\|\nabla j\|_{L^q} \le C\, \|j\|_{L^2}^{1-\frac{2(q-1)}{\beta q}}\,\|\Lambda^\beta j\|_{L^2}^{\frac{2(q-1)}{\beta q}}
\end{equation}
and note that (\ref{qran}) ensures that $\frac{2(q-1)}{\beta q} \le 1$. By Young's inequality,
$$
\|b\|_{L^\infty} \|\nabla j\|_{L^q} \le C\,(\|b\|_{L^2}^2 + \|j\|_{L^2}^2 + \|\Lambda^\beta j\|_{L^2}^2),
$$
Therefore,
$$
\|\om(t)\|_{L^q} \le \|\om_0\|_{L^q} + C\,\int_0^t (\|b\|_{L^2}^2 + \|j\|_{L^2}^2 + \|\Lambda^\beta j\|_{L^2}^2)\,dt
$$
and the bounds in Lemmas \ref{L2} and \ref{H1} yield the global bound for $\|\om(t)\|_{L^q}$. To get a global bound for
$\|j\|_{L^q}$, we first obtain from the equation of $j$ in (\ref{vj})
\begin{equation}\label{Lqjroot}
\frac1q \frac{d}{dt} \|j\|_{L^q}^q + \int j|j|^{q-2}\, (-\Delta)^\beta j =K_1 + K_2 + K_3 +K_4 +K_5,
\end{equation}
where $K_1, \cdots, K_5$ are given by
\begin{eqnarray*}
K_1 &=& \int b\cdot \nabla \om \, j|j|^{q-2}, \\
K_2 &=& 2 \int \partial_1b_1\,\p_2 u_1 \, j|j|^{q-2},\\
K_3 &=& 2 \int \partial_1b_1\,\p_1 u_2 \, j|j|^{q-2},\\
K_4 &=& 2 \int \partial_1u_1\,\p_2 b_1 \, j|j|^{q-2},\\
K_5 &=& 2 \int \partial_1u_1\, \p_1 b_2 \, j|j|^{q-2}.\\
\end{eqnarray*}
According to \cite{CC}, we have the lower bound, for $C_0=C_0(\beta,q)$,
\begin{equation}\label{Lower}
\int j|j|^{q-2}\, (-\Delta)^\beta j \ge  C_0\,\int |\Lambda^\beta (|j|^{\frac{q}{2}})|^2.
\end{equation}
By integration by parts and H\"{o}lder's inequality,
\begin{eqnarray*}
|K_1| &=& \frac{2(q-1)}{q} \left|\int \om \,b\cdot \nabla(|j|^{\frac{q}{2}})\, |j|^{\frac{q}{2}-1} \right| \\
&\le & C\, \|b\|_{L^\infty} \, \|\om\|_{L^q} \, \|j\|_{L^q}^{\frac{q}{2}-1} \, \|\nabla(|j|^{\frac{q}{2}})\|_{L^2}.
\end{eqnarray*}
By the trivial embedding inequality, for $\beta \ge 1$,
$$
\|\nabla(|j|^{\frac{q}{2}})\|_{L^2} \le \||j|^{\frac{q}{2}}\|_{H^\beta} \le \||j|^{\frac{q}{2}}\|_{L^2} +
\|\Lambda^\beta(|j|^{\frac{q}{2}})\|_{L^2} =\|j\|_{L^q}^{\frac{q}{2}} + \|\Lambda^\beta(|j|^{\frac{q}{2}})\|_{L^2},
$$
we obtain
\begin{eqnarray*}
|K_1| &\le& \frac{C_0}{2} \int |\Lambda^\beta (|j|^{\frac{q}{2}})|^2 + C\,\|b\|_{L^\infty} \, \|\om\|_{L^q}
\,\|j\|_{L^q}^{q-1} + C\, \|b\|^2_{L^\infty} \, \|\om\|^2_{L^q}
\,\|j\|_{L^q}^{q-2}\\
&\le& \frac{C_0}{2} \int |\Lambda^\beta (|j|^{\frac{q}{2}})|^2 + C\,(1+\|b\|_{L^\infty})\|b\|_{L^\infty}
(\|\om\|_{L^q}^{q} + \|j\|_{L^q}^{q}).
\end{eqnarray*}
$K_2$ can be easily bounded. In fact, by the Sobolev inequality with $\beta>1$,
$$
\|f\|_{L^\infty(\mathbb{R}^2)} \le C\, \|f\|_{L^2(\mathbb{R}^2)}^{1-\frac1\beta}\,
\|\Lambda^\beta f\|_{L^2(\mathbb{R}^2)}^{\frac1\beta},
$$
we have
\begin{eqnarray}
|K_2| &\le& 2 \|\partial_1b_1\|_{L^\infty}\, \|\om\|_{L^q} \,\|j\|_{L^q}^{q-1} \nonumber\\
&\le&  C\, \|\partial_1b_1\|_{L^2}^{1-\frac1\beta}\, \|\Lambda^\beta\partial_1b_1\|_{L^2}^{\frac1\beta}\,
(\|\om\|_{L^q}^q + \|j\|_{L^q}^q) \nonumber\\
&\le&  C\, \|j\|_{L^2}^{1-\frac1\beta}\, \|\Lambda^\beta j\|_{L^2}^{\frac1\beta}\,
(\|\om\|_{L^q}^q + \|j\|_{L^q}^q), \label{k2b}
\end{eqnarray}
where the simple inequality $\|\partial_1b_1\|_{L^2} \le \|\nabla b\|_{L^2} =\|\nabla\times b\|_{L^2} =\|j\|_{L^2}$.
Clearly, $K_3$, $K_4$ and $K_5$ admits the same bound as in (\ref{k2b}). Collecting the estimates, we have
\begin{eqnarray*}
&&\frac1q \frac{d}{dt} \|j\|_{L^q}^q + \frac{C_0}{2} \int |\Lambda^\beta (|j|^{\frac{q}{2}})|^2 \\
&& \qquad\qquad \le C\,(\|b\|_{L^\infty}+\|b\|_{L^\infty}^2+ \|j\|_{L^2}^{1-\frac1\beta}\, \|\Lambda^\beta j\|_{L^2}^{\frac1\beta})
(\|\om\|_{L^q}^{q} + \|j\|_{L^q}^{q}).
\end{eqnarray*}
Thanks to the time integrability of  $\|b\|_{L^\infty}^2$ (see (\ref{bL8}) for a bound) and
$\|j\|_{L^2}^{1-\frac1\beta}\, \|\Lambda^\beta j\|_{L^2}^{\frac1\beta}$
on any finite time interval and the global bound for $\|\om\|_{L^q}$, this differential
inequality yields the global bound for $\|j\|_{L^q}$. This completes the proof of Proposition \ref{Lq}.
\end{proof}

\vskip .3in
\section{Global $L^1_t B^s_{q,1}$-bound for $j$}

\vskip .1in
This section establishes a global bound for $j$ in the space-time Besov space $L^1_t B^s_{q,1}$, where $q$ satisfies
(\ref{qran}) and $\frac2q <s <2\beta-1$. For $\beta>1$, this global bound provides a better integrability than the one
given in (\ref{H1int}) and will be exploited to gain higher regularity in the next section.

\vskip .1in
\begin{prop} \label{Bsq1}
Assume $(u_0, b_0)$ satisfies the conditions stated in Theorem \ref{main}. Let $(u, b)$ be the
corresponding solution of (\ref{MHDb}) with $\beta>1$.  Then, for any $q$ and $s$ satisfying
\begin{equation}\label{qscon}
2 \le q \le \frac{2}{2-\beta}, \qquad \frac2q <s <2\beta-1
\end{equation}
(the range of $q$ is modified to $2\le q<\infty$ when $\beta=2$), and for any $T>0$ and $t\le T$, there exists a constant
$C=C(q, s, T, u_0, b_0)$ such that
\begin{equation}\label{jBesov}
\|j\|_{L^1_t B^s_{q,1}} \le C.
\end{equation}
\end{prop}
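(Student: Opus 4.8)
The plan is to run a Littlewood--Paley energy estimate for $j$ and to use $\na\cdot u=\na\cdot b=0$ to throw the transport term into the source. Rewrite the $j$-equation in (\ref{vj}) as
$$
\p_t j+(-\Dd)^\beta j=F,\qquad F:=\na\cdot(b\om)-\na\cdot(uj)+2\p_1b_1(\p_2u_1+\p_1u_2)-2\p_1u_1(\p_2b_1+\p_1b_2),
$$
where $\na\cdot(b\om)=b\cdot\na\om$ and $\na\cdot(uj)=u\cdot\na j$. For $\ell\ge0$ apply $\Delta_\ell$, multiply by $|\Delta_\ell j|^{q-2}\Delta_\ell j$, and integrate; the dissipation obeys the Bernstein-type lower bound $\int|\Delta_\ell j|^{q-2}\Delta_\ell j\,(-\Dd)^\beta\Delta_\ell j\ge c\,2^{2\beta\ell}\|\Delta_\ell j\|_{L^q}^q$ (cf.\ \cite{CC} together with Proposition \ref{bern}), so after dividing by $\|\Delta_\ell j\|_{L^q}^{q-1}$ one obtains the linear differential inequality
$$
\ddt\|\Delta_\ell j\|_{L^q}+c\,2^{2\beta\ell}\|\Delta_\ell j\|_{L^q}\le\|\Delta_\ell F\|_{L^q}.
$$
Duhamel, integration in time over $[0,t]$ (which turns the kernel $e^{-c2^{2\beta\ell}(t-\tau)}$ into $(c\,2^{2\beta\ell})^{-1}$), multiplication by $2^{s\ell}$, and summation over $\ell\ge-1$ --- the single mode $\ell=-1$ being controlled crudely through $\|\Delta_{-1}j\|_{L^q}\le C\|j\|_{L^q}$ and Proposition \ref{Lq} --- then give
$$
\|j\|_{L^1_tB^s_{q,1}}\le C(T)+\tfrac1c\|j_0\|_{B^{s-2\beta}_{q,1}}+\tfrac1c\int_0^t\|F(\tau)\|_{B^{s-2\beta}_{q,1}}\,d\tau.
$$
Since $s-2\beta<0$ and $j_0\in L^q$, the middle term is finite, so the whole matter reduces to proving the space-time bound $\int_0^t\|F\|_{B^{s-2\beta}_{q,1}}\,d\tau<\infty$.

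Next I would estimate $F$ term by term in $B^{s-2\beta}_{q,1}$ by Bony's decomposition, using throughout that $\na b$ and $\na u$ are comparable (via Riesz transforms, bounded on $L^q$ for $1<q<\infty$) to $j$ and $\om$. For the two divergence terms one peels off a derivative, $\|\na\cdot G\|_{B^{s-2\beta}_{q,1}}\lesssim\|G\|_{B^{s-2\beta+1}_{q,1}}$, and this is exactly where the hypothesis $s<2\beta-1$ is used, to keep $s-2\beta+1<0$; then $\|b\om\|_{B^{s-2\beta+1}_{q,1}}\lesssim\|b\|_{L^\infty}\|\om\|_{L^q}$ (via $L^q\hookrightarrow B^\sigma_{q,1}$ for $\sigma<0$) and $\|uj\|_{B^{s-2\beta+1}_{q,1}}\lesssim(\|u\|_{L^2}+\|u\|_{L^q}+\|\om\|_{L^q})\|j\|_{L^q}$, the latter after splitting into paraproducts and a remainder and using $u\in L^\infty_t(L^2\cap L^q\cap\dot W^{1,q})$. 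For the four quadratic terms, each of schematic form $(\na b)(\na u)$, the paraproduct pieces are each $\lesssim\|j\|_{L^q}\|\om\|_{L^q}$ (again using $L^q\hookrightarrow B^{s-2\beta+2/q}_{q,1}$, legitimate because $s<2\beta-2/q$).

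The genuinely delicate object is the remainder $R(\na b,\na u)$ from these quadratic terms: a remainder estimate requires \emph{positive} total regularity, which two $L^q$ functions alone do not provide. Here I would invoke Lemma \ref{H1}, which yields $\na b\in L^2_tH^\beta$ with $\|\na b\|_{H^\beta}\lesssim\|j\|_{L^2}+\|\Lambda^\beta j\|_{L^2}$. Fixing any $\beta'\in(0,\beta)$ and using the embeddings $H^\beta\hookrightarrow B^{\beta'}_{2,1}$ (applied to $\na b$) and $L^q\hookrightarrow B^0_{q,\infty}$ (applied to $\na u$), the remainder law $R:B^{\beta'}_{2,1}\times B^0_{q,\infty}\to B^{\beta'}_{\frac{2q}{q+2},1}$ (valid since $\beta'>0$ and $\tfrac12+\tfrac1q\le1$), followed by the Besov embeddings $B^{\beta'}_{\frac{2q}{q+2},1}\hookrightarrow B^{\beta'-1}_{q,1}\hookrightarrow B^{s-2\beta}_{q,1}$ (the last because $\beta'-1>s-2\beta$, as $s-2\beta+1<0$), gives
$$
\|R(\na b,\na u)\|_{B^{s-2\beta}_{q,1}}\lesssim\big(\|j\|_{L^2}+\|\Lambda^\beta j\|_{L^2}\big)\|\om\|_{L^q}.
$$

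Collecting everything, $\|F\|_{B^{s-2\beta}_{q,1}}$ is dominated by a finite sum of products each of which, after integrating over $[0,t]$, is controlled by Proposition \ref{Lq} (for $\|\om\|_{L^q}$ and $\|j\|_{L^q}$), by Lemma \ref{L2} (for $\|u\|_{L^2}$, $\|b\|_{L^2}$ and, via Sobolev, $\|u\|_{L^q}$), and by the only two genuinely time-dependent integrals $\int_0^t\|b\|_{L^\infty}\,d\tau$ and $\int_0^t\|\Lambda^\beta j\|_{L^2}\,d\tau$, which are finite by (\ref{bL8}) with H\"older in time (since $\tfrac1{1+\beta}<1$) and by Cauchy--Schwarz with Lemma \ref{H1}, respectively. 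This yields $\int_0^t\|F\|_{B^{s-2\beta}_{q,1}}\,d\tau\le C(q,s,T,u_0,b_0)$ and hence (\ref{jBesov}). I expect $R(\na b,\na u)$ to be the main obstacle: with both factors only in $L^q$, the usual product law in the negative-regularity space $B^{s-2\beta}_{q,1}$ fails, and the argument closes only by spending the $L^2_tH^\beta$ smoothing that the magnetic diffusion confers on $j$ (hence on $\na b$) --- which is precisely why the conclusion is an $L^1_t$, not an $L^\infty_t$, bound --- and by $\beta>1$, which enters through Proposition \ref{Lq} and is already what makes the interval $\frac2q<s<2\beta-1$ nonempty. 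The remaining bookkeeping --- low-frequency blocks, the comparisons $\na b\sim j$ and $\na u\sim\om$, and the elementary time integrations --- is routine.
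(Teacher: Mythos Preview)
Your argument is correct and closes without gaps, but it follows a genuinely different route from the paper. The paper keeps the transport term $u\cdot\nabla j$ on the left, expands $\Delta_k(u\cdot\nabla j)$ via Bony's decomposition with a commutator $[\Delta_k,S_{l-1}u\cdot\nabla]$, and obtains on the right a collection of terms each carrying a factor of $\|\Delta_m j\|_{L^q}$ (or sums thereof); after multiplying by $2^{k(s-2\beta)}$ and summing, several of these reconstitute pieces of $\|j\|_{L^1_tB^s_{q,1}}$ with a small coefficient (obtained by a low/high-frequency split using $\frac2q-2\beta<0$), which are then absorbed into the left-hand side. The remainder-type terms such as $L_{15}$ and $L_{33}$ are likewise controlled by $\|\om\|_{L^q}$ times Besov pieces of $j$ and handled by the same absorption. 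You instead throw $u\cdot\nabla j=\nabla\cdot(uj)$ into the source $F$, estimate $F$ directly in the \emph{negative} space $B^{s-2\beta}_{q,1}$ (where the derivative loss costs only the margin $s<2\beta-1$), and thereby avoid any absorption step; for the troublesome remainder $R(\nabla b,\nabla u)$ you spend the $L^2_tH^\beta$ smoothing of $j$ from Lemma~\ref{H1} rather than the yet-unknown Besov norm. Your route is arguably more streamlined (no commutators, no buckling), while the paper's route is slightly more self-contained in that it feeds only on the $L^q$ bounds of Proposition~\ref{Lq} and never revisits Lemma~\ref{H1}; both arguments iterate to larger $r$ in exactly the way Section~5 requires.
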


We remark that the proof of this theorem makes use of the global $L^q$-bound for $\|\om\|_{L^q}$ and $\|j\|_{L^q}$
and this explains why we need to restrict $q$ to the range in (\ref{qscon}). It can be seen from the proof
that this theorem remains valid for any $q\ge 2$ as long as  $\|\om\|_{L^q}$ and $\|j\|_{L^q}$ are bounded.
We now prove Proposition \ref{Bsq1}.

\begin{proof}[Proof of Proposition \ref{Bsq1}]
Let $k\ge 0$ be an integer. Applying $\Delta_k$ to the equation of $j$ in (\ref{vj}), multiplying by
$\Delta_k j\, |\Delta_k j|^{q-2}$ and integrating in space, we obtain
\begin{equation} \label{deltaroot}
\frac1q \frac{d}{dt} \|\Delta_k j\|_{L^q}^q + \int \Delta_k j\, |\Delta_k j|^{k-2} (-\Delta)^\beta \Delta_k j = L_1 + \cdots+ L_6,
\end{equation}
where $L_1$,$\cdots$, $L_6$ are given by
\begin{eqnarray*}
L_1 &=& -\int \Delta_k j\, |\Delta_k j|^{q-2}\, \Delta_k(u\cdot\nabla j),\\
L_2 &=& \int\Delta_k j\, |\Delta_k j|^{q-2}\, \Delta_k(b\cdot\nabla \om),\\
L_3 &=& 2\int\Delta_k j\, |\Delta_k j|^{q-2}\, \Delta_k(\partial_1 b_1 \partial_2 u_1),\\
L_4 &=& 2\int\Delta_k j\, |\Delta_k j|^{q-2}\, \Delta_k(\partial_1 b_1 \partial_1 u_2),\\
L_5 &=& -2\int\Delta_k j\, |\Delta_k j|^{q-2}\, \Delta_k(\partial_1 u_1 \partial_2 b_1),\\
L_6 &=& -2\int\Delta_k j\, |\Delta_k j|^{q-2}\, \Delta_k(\partial_1 u_1 \partial_1 b_2).
\end{eqnarray*}
The term associated the magnetic diffusion admits the lower bound (see \cite{CMZ})
$$
\int \Delta_k j\, |\Delta_k j|^{q-2} (-\Delta)^\beta \Delta_k j \ge C_1 2^{2\beta k} \|\Delta_k j\|_{L^q}^q
$$
for $C_1=C_1(\beta, q)$. By Bony's paraproducts decomposition, we write
\begin{eqnarray*}
L_1 =L_{11} + L_{12} + L_{13} + L_{14} + L_{15},
\end{eqnarray*}
where
\begin{eqnarray*}
L_{11} &=& -\int \Delta_k j\, |\Delta_k j|^{q-2}\, \sum_{|k-l|\le 2} [\Delta_k, S_{l-1} u\cdot\nabla] \Delta_l j,\\
L_{12} &=& -\int \Delta_k j\, |\Delta_k j|^{q-2}\, \sum_{|k-l|\le 2} (S_{l-1} u-S_k u)\cdot\nabla \Delta_k\Delta_l j, \\
L_{13} &=& -\int \Delta_k j\, |\Delta_k j|^{q-2}\, S_k u \cdot\nabla \Delta_k j, \\
L_{14} &=& -\int \Delta_k j\, |\Delta_k j|^{q-2}\, \sum_{|k-l|\le 2} \Delta_k(\Delta_l u\cdot\nabla S_{l-1} j), \\
L_{15} &=& -\int \Delta_k j\, |\Delta_k j|^{q-2}\, \sum_{l\ge k-1} \Delta_k(\Delta_l u\cdot\nabla \widetilde{\Delta}_l j)
\end{eqnarray*}
with $\widetilde{\Delta}_l =\Delta_{l-1} + \Delta_l + \Delta_{l+1}$. Thanks to $\nabla\cdot S_k u =0$, we have $L_{13}=0$.
By H\"{o}lder's inequality, a standard commutator estimate, and Bernstein's inequality,
\begin{eqnarray}
|L_{11}| &\le& \|\Delta_k j\|_{L^q}^{q-1}\, \sum_{|k-l|\le 2} \|[\Delta_k, S_{l-1} u\cdot\nabla] \Delta_l j\|_{L^q} \nonumber \\
 &\le& C\, \|\Delta_k j\|_{L^q}^{q-1}\, \|\nabla S_{k-1} u\|_{L^q} \, \|\Delta_k j\|_{L^\infty} \nonumber \\
 &\le& C\, \|\Delta_k j\|_{L^q}^{q-1}\, \|\om\|_{L^q}\, 2^{k\frac2q} \|\Delta_k j\|_{L^q}. \label{l11b}
\end{eqnarray}
Here we have also applied the simple fact that, for fixed $k$, the summation is for a finite number of $l$ satisfying
$|k-l|\le 2$ and the estimate for the term with the index $l$ is only a constant multiple of the bound for the term
with the index $k$. In addition, the simple bound
$$
\|\nabla S_{k-1} u\|_{L^q}  \le \|\nabla u\|_{L^q}  \le C \|\om\|_{L^q}
$$
is also used here. It is easily seen that $L_{12}$ obeys the same bound as in (\ref{l11b}). By H\"{o}lder's inequality
and and Bernstein's inequality (both lower bound and upper bound parts),
\begin{eqnarray*}
|L_{14}| &\le&  C\, \|\Delta_k j\|_{L^q}^{q-1}\, \|\Delta_k u\|_{L^q} \,\|\nabla S_{k-1} j\|_{L^\infty}\\
&\le&  C\, \|\Delta_k j\|_{L^q}^{q-1}\, 2^{-k}\, \|\nabla \Delta_k u\|_{L^q} \,
\sum_{m\le k-1} 2^{(1+\frac{2}{q})m} \|\Delta_m j\|_{L^q} \\
&\le&  C\, \|\om\|_{L^q}\,\|\Delta_k j\|_{L^q}^{q-1}\, 2^{-k}\,  \sum_{m\le k-1} 2^{(1+\frac{2}{q})m} \|\Delta_m j\|_{L^q}.
\end{eqnarray*}
By the divergence-free condition, $\nabla\cdot \Delta_l u=0$,
\begin{eqnarray*}
|L_{15}| &\le&  C\, \|\Delta_k j\|_{L^q}^{q-1}\, \sum_{l\ge k-1} 2^k \|\Delta_l u\|_{L^q} \,\|\widetilde{\Delta}_l j\|_{L^\infty}\\
 &\le&  C\, \|\Delta_k j\|_{L^q}^{q-1}\, \sum_{l\ge k-1} 2^{k-l} \|\nabla \Delta_l u\|_{L^q}\, 2^{l\frac2q}\, \|\widetilde{\Delta}_l j\|_{L^q}\\
 &\le&  C\,\|\om\|_{L^q}\,\|\Delta_k j\|_{L^q}^{q-1}\, 2^{k\frac2q}\, \sum_{l\ge k-1} 2^{(k-l)(1-\frac2q)}\, \|\Delta_l j\|_{L^q}.
\end{eqnarray*}
We have thus completed the estimate for $L_1$. To bound $L_2$, we also decompose it into five terms,
\begin{eqnarray*}
L_2 =L_{21} + L_{22} + L_{23} + L_{24} + L_{25},
\end{eqnarray*}
where
\begin{eqnarray*}
L_{21} &=& -\int \Delta_k j\, |\Delta_k j|^{q-2}\, \sum_{|k-l|\le 2} [\Delta_k, S_{l-1} b\cdot\nabla] \Delta_l \om,\\
L_{22} &=& -\int \Delta_k j\, |\Delta_k j|^{q-2}\, \sum_{|k-l|\le 2} (S_{l-1} b-S_k b)\cdot\nabla \Delta_k \Delta_l \om, \\
L_{23} &=& -\int \Delta_k j\, |\Delta_k j|^{q-2}\, S_k b \cdot\nabla \Delta_k \om, \\
L_{24} &=& -\int \Delta_k j\, |\Delta_k j|^{q-2}\, \sum_{|k-l|\le 2} \Delta_k(\Delta_l b\cdot\nabla S_{l-1} \om), \\
L_{25} &=& -\int \Delta_k j\, |\Delta_k j|^{q-2}\, \sum_{l\ge k-1} \Delta_k(\Delta_l b\cdot\nabla \widetilde{\Delta}_l \om).
\end{eqnarray*}
The estimates of these terms are similar to those in $L_1$, but there are some differences, as can be seen from the bounds below.
\begin{eqnarray*}
|L_2| &\le&  C\, 2^{k\frac2q}\,\|\Delta_k j\|_{L^q}^{q-1}\, \|\nabla b\|_{L^q}\,\|\om\|_{L^q}
+ C\, 2^{k}\,\|\Delta_k j\|_{L^q}^{q-1}\, \|b\|_{L^\infty}\,\|\om\|_{L^q} \\
&& + C\, 2^{k\frac2q}\,\|\Delta_k j\|_{L^q}^{q-1}\,\sum_{l\ge k-1} 2^{(k-l)(1-\frac{2}{q})} \|\nabla \Delta_l b\|_{L^q}\,
\|\Delta_l \om\|_{L^q}\\
 &\le&  C\, 2^{k\frac2q}\,\|\Delta_k j\|_{L^q}^{q-1}\, \|\nabla b\|_{L^q}\,\|\om\|_{L^q}
+ C\, 2^{k}\,\|\Delta_k j\|_{L^q}^{q-1}\, \|b\|_{L^\infty}\,\|\om\|_{L^q}.
\end{eqnarray*}
To bound $L_3$, we decompose it into three terms as
\begin{eqnarray*}
L_3 =L_{31} + L_{32} + L_{33},
\end{eqnarray*}
where
\begin{eqnarray*}
L_{31} &=& 2\int \Delta_k j\, |\Delta_k j|^{q-2}\, \sum_{|k-l|\le 2}\Delta_k(S_{l-1} \partial_1 b_1 \Delta_l\partial_2 u_1), \\
L_{32} &=& 2\int \Delta_k j\, |\Delta_k j|^{q-2}\, \sum_{|k-l|\le 2}\Delta_k(\Delta_l \partial_1 b_1 S_{l-1}\partial_2 u_1), \\
L_{33} &=& 2\int \Delta_k j\, |\Delta_k j|^{q-2}\, \sum_{l\ge k-1} \Delta_k(\Delta_l \partial_1 b_1\widetilde{\Delta}_l\partial_2 u_1).
\end{eqnarray*}
These terms can be bounded by
\begin{eqnarray*}
|L_{31}| &\le& C\,\|\Delta_k j\|_{L^q}^{q-1}\, \|S_{k-1} \partial_1 b_1\|_{L^\infty} \, \|\Delta_k\partial_2 u_1\|_{L^q} \\
&\le& C\,\|\Delta_k j\|_{L^q}^{q-1}\, 2^{k\frac2q}\,\|j\|_{L^q}\,\|\om\|_{L^q}
\end{eqnarray*}
where we have used the bound $\|\nabla u\|_{L^q} \le C\, \|\om\|_{L^q}$ and $\|\nabla b\|_{L^q} \le C\, \|j\|_{L^q}$.  Clearly,
$L_{32}$ admits the same bound. $L_{33}$ is bounded by
\begin{eqnarray*}
|L_{33}| &\le& C\, \|\Delta_k j\|_{L^q}^{q-1}\, \sum_{l\ge k-1} 2^{l\frac2q}\,\|\Delta_l \partial_1 b_1\|_{L^q}\,
\|\Delta_l\partial_2 u_1\|_{L^q} \\
&\le& C\, \|\om\|_{L^q}\, \|\Delta_k j\|_{L^q}^{q-1}\, \sum_{l\ge k-1} 2^{l\frac2q}\,\|\Delta_l j\|_{L^q}.
\end{eqnarray*}
Collecting all the estimates above, we obtain
\begin{eqnarray*}
\frac{d}{dt} \|\Delta_k j\|_{L^q} + C_1\, 2^{2\beta k} \|\Delta_k j\|_{L^q} \le RHS(t),
\end{eqnarray*}
where $RHS(t)$ denotes the bound
\begin{eqnarray}
&& RHS(t)\equiv C\,\|\om\|_{L^q}\, 2^{k\frac2q} \|\Delta_k j\|_{L^q} \nonumber\\
&&\qquad \qquad + \, C\, \|\om\|_{L^q}\, 2^{-k}\,  \sum_{m\le k-1} 2^{(1+\frac{2}{q})m} \|\Delta_m j\|_{L^q} \nonumber\\
&&\qquad \qquad + \, C\, 2^{k\frac2q}\,\|\om\|_{L^q}\, \sum_{l\ge k-1} 2^{(k-l)(1-\frac2q)}\, \|\Delta_l j\|_{L^q} \nonumber\\
&&\qquad \qquad + \,C\, 2^{k\frac2q}\,\|j\|_{L^q}\,\|\om\|_{L^q}
+ C\, 2^{k}\, \|b\|_{L^\infty}\,\|\om\|_{L^q} \nonumber\\
&&\qquad \qquad + \, C\, \|\om\|_{L^q}\, \sum_{l\ge k-1} 2^{l\frac2q}\,\|\Delta_l j\|_{L^q}. \label{rhst}
\end{eqnarray}
Integrating in time, we have
\begin{eqnarray*}
\|\Delta_k j(t)\|_{L^q} \le e^{-C_1\,2^{2\beta k} t}\, \|\Delta_k j(0)\|_{L^q}
+ \int_0^t e^{-C_1\,2^{2\beta k} (t-\tau)}\, RHS(\tau)\,d\tau.
\end{eqnarray*}
For any fixed $t>0$, we take the $L^1$-norm on $[0,t]$ and apply Young's inequality to obtain
\begin{eqnarray*}
\int_0^t \|\Delta_k j(\tau)\|_{L^q}\, d\tau \le C\,2^{-2\beta k} \|\Delta_k j(0)\|_{L^q}
+ C\, 2^{-2\beta k} \int_0^t  RHS(\tau)\,d\tau.
\end{eqnarray*}
Multiplying by $2^{ks}$ and summing over $k=0,1,2,\cdots$, we obtain
\begin{eqnarray}
\|j\|_{L^1_t B^s_{q,1}} &=&\int_0^t \|\Delta_{-1} j(\tau)\|_{L^q}\,d\tau +  \sum_{k=0}^\infty 2^{ks} \int_0^t \|\Delta_k j(\tau)\|_{L^q}\, d\tau\nonumber \\
&\le& C(t) + C\, \|j_0\|_{B^{s-2\beta}_{q,1}} + \sum_{k=0}^\infty 2^{k(s-2\beta)} \int_0^t  RHS(\tau)\,d\tau, \label{jroot}
\end{eqnarray}
where we have applied the global bound $\|\Delta_{-1} j(\tau)\|_{L^q} \le \|j\|_{L^q}$. For the clarity
of presentation, we write
\begin{eqnarray*}
\sum_{k=0}^\infty 2^{k(s-2\beta)} \int_0^t  RHS(\tau)\,d\tau \equiv M_1 + M_2 + M_3 + M_4 +M_5+M_6,
\end{eqnarray*}
where, according to (\ref{rhst}),
\begin{eqnarray*}
M_1 &=& C\,\|\om\|_{L^\infty_tL^q}\, \sum_{k=0}^\infty 2^{k(s+\frac2q-2\beta)} \int_0^t \|\Delta_k j(\tau)\|_{L^q}\,d\tau,\\
M_2 &=& C\,\|\om\|_{L^\infty_tL^q}\, \sum_{k=0}^\infty \sum_{m\le k-1} 2^{(m-k)(1-s+2\beta)}\,
2^{(s+\frac{2}{q}-2\beta)m} \int_0^t \|\Delta_m j\|_{L^q}\,d\tau,\\
M_3 &=& C\,\|\om\|_{L^\infty_tL^q}\, \sum_{k=0}^\infty 2^{k(1+s-2\beta)} \sum_{l\ge k-1} 2^{(-1+\frac2q)l}\, \int_0^t \|\Delta_l j(\tau)\|_{L^q}\,d\tau,\\
M_4 &=& C\,t\,\|\om\|_{L^\infty_tL^q}\, \|j\|_{L^\infty_tL^q}\,\sum_{k=0}^\infty 2^{k(s+\frac2q-2\beta)},\\
M_5 &=& C\,\|\om\|_{L^\infty_tL^q}\,\|b\|_{L^1_tL^\infty}\,\sum_{k=0}^\infty 2^{k(s+1-2\beta)},\\
M_6 &=& C\,\|\om\|_{L^\infty_tL^q}\,\sum_{k=0}^\infty 2^{k(s-2\beta)}\sum_{l\ge k-1} 2^{l\frac2q}\,\int_0^t\|\Delta_l j(\tau)\|_{L^q}\,d\tau.
\end{eqnarray*}
Since $\frac2q-2\beta <0$, we can choose an integer $k_0>0$ such that
$$
C\,\|\om\|_{L^\infty_tL^q}\, 2^{k_0(\frac2q-2\beta)}  \le \frac1{16}.
$$
We can split the sum in $M_1$ into two parts,
\begin{eqnarray*}
M_1 &=& C\,\|\om\|_{L^\infty_tL^q}\, \sum_{k=0}^{k_0} 2^{k(s+\frac2q-2\beta)} \int_0^t \|\Delta_k j(\tau)\|_{L^q}\,d\tau\\
&& +\, C\,\|\om\|_{L^\infty_tL^q}\, \sum_{k=k_0+1}^{\infty} 2^{k(s+\frac2q-2\beta)} \int_0^t \|\Delta_k j(\tau)\|_{L^q}\,d\tau\\
&=& C(t, u_0, b_0) + \frac1{16} \|j\|_{L^1_t B^s_{q,1}},
\end{eqnarray*}
where we have used the bound
$$
\int_0^t \|\Delta_k j(\tau)\|_{L^q}\,d\tau \le \int_0^t \|j(\tau)\|_{L^q}\,d\tau \le C.
$$
To deal with $M_2$, we first realize that $1-s+2\beta>0$ and $(m-k)(1-s+2\beta)<0$,
we apply Young's inequality for series convolution to obtain
$$
M_2 \le C\,\|\om\|_{L^\infty_tL^q}\, \sum_{k=0}^\infty 2^{k(s+\frac2q-2\beta)} \int_0^t \|\Delta_k j(\tau)\|_{L^q}\,d\tau,
$$
which obeys the same bound as $M_1$, namely
$$
M_2 \le C(t, u_0, b_0) + \frac1{16} \|j\|_{L^1_t B^s_{q,1}}.
$$
To bound $M_3$, we exchange the order of two sums to get
\begin{eqnarray*}
M_3 &=& C\, \|\om\|_{L^\infty_tL^q}\, \sum_{l=-1}^\infty 2^{(-1+\frac2q)l}\,
\int_0^t \|\Delta_l j(\tau)\|_{L^q}\,d\tau \sum_{k=0}^{l+1} 2^{k(1+s-2\beta)}.
\end{eqnarray*}
Since $1+s-2\beta<0$, we have
$$
\sum_{k=0}^{l+1} 2^{k(1+s-2\beta)} \le C
$$
and thus
\begin{eqnarray*}
M_3 &=& C\, \|\om\|_{L^\infty_tL^q}\, \sum_{l=-1}^\infty 2^{(-1+\frac2q-s)l}\,
2^{ls} \int_0^t \|\Delta_l j(\tau)\|_{L^q}\,d\tau.
\end{eqnarray*}
Noticing  that $-1+\frac2q-s<0$, we take a positive integer $l_0$ such that
$$
C\, \|\om\|_{L^\infty_tL^q}\, 2^{(-1+\frac2q-s)l_0} < \frac1{16}.
$$
Then, $M_3$ is bounded by
$$
M_3 \le C(t, u_0, b_0) + \frac1{16} \|j\|_{L^1_t B^s_{q,1}}.
$$
Since $s+1-2\beta<0$, we clearly have
$$
M_4+M_5 \le C\,t\,\|\om\|_{L^\infty_tL^q}\, \|j\|_{L^\infty_tL^q}\, + C\,\|\om\|_{L^\infty_tL^q}\,\|b\|_{L^1_tL^\infty}.
$$
$M_6$ can be bounded in a similar fashion as $M_3$ and we have,  for $\frac2q<s$,
$$
M_6 \le C(t, u_0, b_0) + \frac1{16} \|j\|_{L^1_t B^s_{q,1}}.
$$
Inserting the estimates above in (\ref{jroot}), we have
\begin{eqnarray*}
\|j\|_{L^1_t B^s_{q,1}} \le C(t, u_0, b_0) + C\, \|j_0\|_{B^{s-2\beta}_{q,1}} +  \frac14\,\|j\|_{L^1_t B^s_{q,1}}.
\end{eqnarray*}
This completes the proof of Proposition \ref{Bsq1}.
\end{proof}

\vskip .3in
\section{Higher regularity through an iterative process and proof of Theorem \ref{main}}

\vskip .1in
This section explores some consequences of Proposition \ref{Bsq1}. In particular, we
obtain global bounds for $\nabla j$ in $L^1_tL^\infty_x$ and $\om$ in $L^\infty_{t,x}$, which are sufficient for the
proof of Theorem \ref{main}.  We now state the proposition for high regularity.

\begin{prop}\label{high}
Assume $(u_0, b_0)$ satisfies the conditions stated in Theorem \ref{main}. Let $(u, b)$ be the
corresponding solution of (\ref{MHDb}) with $\beta>1$. Then, for any $T>0$ and  $t\le T$, there exists a constant
$C=C(T, u_0, b_0)$ such that
$$
\int_0^t \|\nabla j(\tau)\|_{L^\infty}\,d\tau \le C, \qquad \|\om(t)\|_{L^\infty} \le C.
$$
\end{prop}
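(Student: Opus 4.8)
The overall strategy I would follow is the one sketched in the introduction: run a finite bootstrap that raises the Lebesgue exponent in the bounds for $(\om,j)$ until $\nabla j\in L^1_tL^\infty$, and then read off the $L^\infty$-bound for $\om$ from the transport structure of the vorticity equation. A preliminary observation is that $\|b\|_{L^\infty_{t,x}}\le C$. Indeed, Proposition~\ref{Lq} gives $\|j\|_{L^\infty_tL^{q_0}}\le C$ for some $q_0\in(2,\tfrac{2}{2-\beta}]$; since $\|\nabla b\|_{L^{q_0}}\le C\|j\|_{L^{q_0}}$ and $\|b\|_{L^{q_0}}\le C\|b\|_{L^2}^{2/q_0}\|\nabla b\|_{L^2}^{1-2/q_0}\le C$ by Lemmas~\ref{L2} and \ref{H1}, we get $b\in L^\infty_tW^{1,q_0}$, and $W^{1,q_0}(\mathbb{R}^2)\hookrightarrow L^\infty(\mathbb{R}^2)$ because $q_0>2$ --- this is where the strict inequality $\beta>1$ enters this step.

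For the bootstrap, suppose $\|\om\|_{L^\infty_tL^q}+\|j\|_{L^\infty_tL^q}\le C$ for some $q\ge2$ (true for $q=q_0$). By the remark following Proposition~\ref{Bsq1}, this gives $\|j\|_{L^1_tB^s_{q,1}}\le C$ for every $s\in(\tfrac2q,2\beta-1)$. If $\tfrac1q<\beta-1$ I would take $s\in(1+\tfrac2q,2\beta-1)$, so that $B^s_{q,1}\hookrightarrow B^1_{\infty,1}$ yields $\int_0^t\|\nabla j\|_{L^\infty}\,d\tau\le C\|j\|_{L^1_tB^s_{q,1}}\le C$, i.e. (\ref{naj}), and the bootstrap terminates. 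Otherwise I would take $s$ close to $2\beta-1$ and set $\tfrac1r=\tfrac1q-\tfrac{s-1}{2}>0$; then $B^s_{q,1}\hookrightarrow B^1_{r,1}\hookrightarrow W^{1,r}$ gives $\nabla j\in L^1_tL^r$ with $\tfrac1r$ as close as we like to $\tfrac1q-(\beta-1)$, in particular $r>q$. Next I would upgrade the exponent: from $\p_t\om+u\cdot\nabla\om=b\cdot\nabla j$ and $\nabla\cdot u=0$,
\[
\ddt\|\om\|_{L^r}\le\|b\|_{L^\infty}\|\nabla j\|_{L^r},
\]
so $\|\om\|_{L^\infty_tL^r}\le\|\om_0\|_{L^r}+\|b\|_{L^\infty_{t,x}}\|\nabla j\|_{L^1_tL^r}\le C$; and from the $j$-equation in (\ref{vj}), using the lower bound (\ref{Lower}), the bound $\|b\|_{L^\infty_{t,x}}\le C$, the Sobolev inequality $\|\nabla b\|_{L^\infty}\le C\|j\|_{L^2}^{1-1/\beta}\|\Lambda^\beta j\|_{L^2}^{1/\beta}\in L^1([0,T])$ (valid since $\beta>1$, using Lemmas~\ref{L2}--\ref{H1}), and Young's inequality exactly as in the proof of Proposition~\ref{Lq} --- placing each $\nabla b$-factor in $L^\infty$ and each $\nabla u$-factor in $L^r$, and absorbing the top-order contribution into $\int|\Lambda^\beta(|j|^{r/2})|^2$ --- one obtains
\[
\ddt\|j\|_{L^r}^r\le g(t)\,(1+\|j\|_{L^r}^r),\qquad g\in L^1([0,T]),
\]
so Gronwall gives $\|j\|_{L^\infty_tL^r}\le C$. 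Then I would repeat the step with $q$ replaced by $r$.

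Since a non-terminating round lowers $\tfrac1q$ by at least $\tfrac{\beta-1}{2}>0$, after finitely many rounds $\tfrac1q<\beta-1$ and (\ref{naj}) holds, which is the first assertion. For the second, since $u$ is divergence free and $\om_0\in L^\infty$ (a consequence of $(u_0,b_0)\in H^s$, $s>2$),
\[
\|\om(t)\|_{L^\infty}\le\|\om_0\|_{L^\infty}+\int_0^t\|b\|_{L^\infty}\|\nabla j\|_{L^\infty}\,d\tau\le\|\om_0\|_{L^\infty}+\|b\|_{L^\infty_{t,x}}\int_0^t\|\nabla j\|_{L^\infty}\,d\tau\le C,
\]
completing the proof of Proposition~\ref{high}.

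The part I expect to be delicate is not any single inequality but the bookkeeping of the iteration: one must check that the exponent genuinely increases each round with a gain bounded away from $0$ (so the scheme halts after finitely many steps), that every constant depends only on $T,u_0,b_0$ and the finitely many intermediate exponents, and --- most importantly --- that the $L^r$-estimate for $j$ really closes, i.e. that each term coming from Bony's decomposition of $b\cdot\nabla\om$ and from the quadratic terms $\p_1b_1(\p_2u_1+\p_1u_2)$ and $\p_1u_1(\p_2b_1+\p_1b_2)$ is absorbed either into the dissipation $\int|\Lambda^\beta(|j|^{r/2})|^2$ or into an $L^1_t$ factor times $\|j\|_{L^r}^r$, uniformly along the finite chain of exponents.
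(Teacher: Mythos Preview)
Your proposal is correct and follows essentially the same route as the paper: use Proposition~\ref{Bsq1} plus a Besov--Sobolev embedding to pass from $(\omega,j)\in L^\infty_tL^q$ to $\nabla j\in L^1_tL^r$ with $r>q$, feed this into the $\omega$-equation and then the $j$-equation (repeating the estimates of Proposition~\ref{Lq}) to promote $(\omega,j)$ to $L^\infty_tL^r$, and iterate until the exponent is large enough that $B^s_{q,1}\hookrightarrow B^1_{\infty,1}$, yielding $\nabla j\in L^1_tL^\infty$ and hence $\omega\in L^\infty_{t,x}$. Your explicit verification that each non-terminating round lowers $\tfrac1q$ by a fixed positive amount (so the loop closes in finitely many steps) is in fact more carefully stated than in the paper, which simply asserts that the iterative process eventually reaches every $r\in[2,\infty)$.
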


The first step is the following integrability result, as a special consequence of Proposition \ref{Bsq1}.
\begin{prop}\label{integ}
Assume $(u_0, b_0)$ satisfies the conditions stated in Theorem \ref{main}. Let $(u, b)$ be the
corresponding solution of (\ref{MHDb}) with $\beta>1$. Then, for any $r$ satisfying
\begin{equation}\label{rcon}
2\le r \le \infty \quad\mbox{if\, $\beta >\frac43$}, \quad\mbox{and}\quad  2\le r<\frac{2}{4-3\beta} \quad \mbox{if \,$\beta\le\frac43$},
\end{equation}
and, for any $T>0$ and  $t\le T$, there exists a constant
$C=C(T, u_0, b_0)$ such that
$$
\int_0^t \|\nabla j(\tau)\|_{L^r}\,d\tau \le C.
$$
\end{prop}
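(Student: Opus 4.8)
The plan is to read off Proposition~\ref{integ} from the space--time Besov bound \eqref{jBesov} of Proposition~\ref{Bsq1} together with Bernstein's inequality, applied pointwise in time. Fix $\tau>0$ and an exponent $q$ with $2\le q\le r$. Applying Bernstein's inequality (Proposition~\ref{bern}, with $d=2$) to the Fourier-localized pieces of $j$ gives $\|\nabla\Delta_k j\|_{L^r}\le C\,2^{k(1+\frac2q-\frac2r)}\|\Delta_k j\|_{L^q}$ for $k\ge0$ (the factor $2^k$ coming from the gradient is legitimate for every $r$, including $r=\infty$, by the annulus form of Bernstein), while $\|\nabla\Delta_{-1}j\|_{L^r}\le C\|\Delta_{-1}j\|_{L^q}\le C\|j\|_{L^q}$. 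Summing in $k$,
$$
\|\nabla j(\tau)\|_{L^r}\le C\|j(\tau)\|_{L^q}+C\sum_{k\ge0}2^{k(1+\frac2q-\frac2r)}\|\Delta_k j(\tau)\|_{L^q}.
$$
Hence, if $(q,s)$ is admissible for Proposition~\ref{Bsq1} (that is, $2\le q\le\frac2{2-\beta}$ and $\frac2q<s<2\beta-1$) and in addition $1+\frac2q-\frac2r\le s$, then each high-frequency weight is dominated by $2^{ks}$, so $\|\nabla j(\tau)\|_{L^r}\le C\|j(\tau)\|_{L^q}+C\|j(\tau)\|_{B^s_{q,1}}$. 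Integrating over $[0,t]$ and invoking Propositions~\ref{Lq} and~\ref{Bsq1} then gives the asserted bound.

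It remains to check that an admissible pair $(q,s)$ with $q\le r$ and $1+\frac2q-\frac2r\le s$ exists exactly under hypothesis \eqref{rcon}. I would take $q=\min\{r,\frac2{2-\beta}\}$, which is $\ge2$ because $\beta>1$, and choose $s$ slightly above $\max\{\frac2q,\,1+\frac2q-\frac2r\}$; since $r\ge2$ this maximum equals $1+\frac2q-\frac2r$, so the only substantive requirement is $1+\frac2q-\frac2r<2\beta-1$ (the other Besov constraint $\frac2q<s$ then leaves room because $\frac2q\le\max\{2-\beta,1\}<2\beta-1$). If $r\le\frac2{2-\beta}$ then $q=r$ and the requirement reads $1<2\beta-1$, true for $\beta>1$; if $r>\frac2{2-\beta}$ then $\frac2q=2-\beta$ and it becomes $\frac2r>4-3\beta$, which holds for all $r\ge2$ when $\beta>\frac43$ and is equivalent to $r<\frac2{4-3\beta}$ when $\beta\le\frac43$. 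These are precisely the cases listed in \eqref{rcon}; when $\beta\ge2$ one argues identically, taking $q$ sufficiently large within the modified range so that $\frac2q-\frac2r<\beta-1$.

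The argument is short and there is no genuine analytic difficulty---it reduces to Bernstein's inequality and Proposition~\ref{Bsq1}. The part that needs care is the exponent bookkeeping: verifying that the window $\big(\max\{\tfrac2q,\,1+\tfrac2q-\tfrac2r\},\,2\beta-1\big)$ for $s$ is nonempty for every $r$ allowed by \eqref{rcon}, and handling the endpoint situations, namely $r=\infty$ (where $\frac2r=0$ and $q=\frac2{2-\beta}$, permissible exactly when $\beta>\frac43$), $r=2$, and $q=2$ (where $\frac2q=1$ forces $s$ to be taken strictly larger than $1$, which is possible since $2\beta-1>1$). One should also record explicitly that the low-frequency block $\Delta_{-1}j$ and the gradient acting on it contribute only $\|j\|_{L^q}$, whose time integrability on $[0,T]$ is supplied by Proposition~\ref{Lq}.
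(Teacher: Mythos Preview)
Your proposal is correct and follows essentially the same route as the paper: bound $\|\nabla j\|_{L^r}$ by $\|j\|_{B^s_{q,1}}$ via Bernstein's inequality and then invoke Proposition~\ref{Bsq1}. The paper's proof is terser---it simply writes $\|\nabla j\|_{L^r}\le\sum_{k\ge-1}2^{k(1+\frac2q-\frac2r-s)}2^{ks}\|\Delta_k j\|_{L^q}$ and, for $\beta>\frac43$, picks the concrete pair $(q,s)=(3,\frac53)$ so that $1+\frac2q-s=0$---whereas you give a systematic choice $q=\min\{r,\frac2{2-\beta}\}$ and carry out the exponent verification in full; your explicit handling of the $k=-1$ block and of the endpoint cases is more careful than the paper's, but the underlying argument is identical.
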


We remark that the range for $r$, namely (\ref{rcon}) is bigger than the one for $q$ in (\ref{qran}).
An immediate consequence is the global bound for $\|\om\|_{L^r}$ and $\|j\|_{L^r}$, as explained in the
proof of Proposition \ref{high}.

\begin{proof}[Proof of Proposition \ref{integ}]
By Bernstein's inequality,
\begin{eqnarray*}
\|\nabla j\|_{L^r} &\le& \sum_{k=-1}^\infty \|\Delta_k \nabla j\|_{L^r}\\
&\le& \sum_{k=-1}^\infty 2^{k(1+ \frac2q-\frac2r-s)}  2^{ks} \|\Delta_k j\|_{L^q}.
\end{eqnarray*}
In the case when $\beta>\frac43$, we can choose $q$ and $s$ satisfying (\ref{qscon}), say $q=3$ and $s=\frac53$, such that
$$
1+ \frac2q-s \le 0,
$$
and consequently, for any $r\in [2, \infty]$,
$$
\|\nabla j\|_{L^r} \le \sum_{k=-1}^\infty 2^{ks} \|\Delta_k j\|_{L^q} \equiv \|j\|_{B^s_{q,1}}.
$$
In the case when $\beta\le \frac43$, we can choose $q$ and $s$ satisfying (\ref{qscon}) and $r$ satisfying (\ref{rcon})
such that
$$
1+ \frac2q-\frac2r-s \le 0
$$
and again
$$
\|\nabla j\|_{L^r} \le \|j\|_{B^s_{q,1}}.
$$
Proposition \ref{integ} then follows from Proposition \ref{Bsq1}.
\end{proof}

We now prove Proposition \ref{high}.
\begin{proof}[Proof of Proposition \ref{high}]
Proposition \ref{integ} allows us to obtain a global bound for $\|\om\|_{L^r}$. In fact, it
follows from the vorticity equation that
$$
\frac1r \frac{d}{dt} \|\om\|_{L^r}^r \le \|b\|_{L^\infty} \|\nabla j\|_{L^r} \, \|\om\|_{L^r}^{r-1}.
$$
Due to the Sobolev inequality, for $q>2$,
$$
\|b\|_{L^\infty} \le C \|b\|_{L^2}^{\frac{q-2}{2(q-1)}}\, \|j\|_{L^q}^{\frac{q}{2q-2}}
$$
and the time integrability of $\|\nabla j\|_{L^r}$ from Proposition \ref{integ}, we obtain the global bound
$$
\|\om(t)\|_{L^r} \le \|\om_0\|_{L^r} + \|b\|_{L^\infty_t L^\infty_x} \int_0^t \|\nabla j\|_{L^r}\,d\tau < \infty.
$$
By going through the proof of Proposition \ref{Lq} with $q$ replaced by $r$, we can show that
$$
\|j(t)\|_{L^r} \le C.
$$
As a consequence of the global bounds for $\|\om(t)\|_{L^r}$ and $\|j(t)\|_{L^r}$, we can prove
Proposition \ref{Bsq1} again with $q$ replaced by $r$. This iterative process allows us to establish
the global bound
$$
\|j\|_{L^1_t B^s_{r,1}} <\infty
$$
for any $r\in [2, \infty)$ and $\frac2r<s<2\beta-1$. As a special consequence, we have, for any $t>0$,
\begin{eqnarray} \label{nj}
\int_0^t \|\nabla j\|_{L^\infty}\,d\tau <\infty.
\end{eqnarray}
In fact,
\begin{eqnarray*}
\|\nabla j\|_{L^\infty} &\le& \sum_{k=-1}^\infty \|\Delta_k \nabla j\|_{L^\infty}
\le \sum_{k=-1}^\infty 2^{k(1+\frac2r)}\, \|\Delta_k j\|_{L^r}\\
&=& \sum_{k=-1}^\infty 2^{k(1+\frac2r-s)}\, 2^{ks}\|\Delta_k j\|_{L^r}
= \sum_{k=-1}^\infty 2^{ks}\|\Delta_k j\|_{L^r} \equiv \|j\|_{B^s_{r,1}},
\end{eqnarray*}
where we have choose $r$ large and $s<2\beta-1$ such that
$$
1+\frac2r-s \le 0.
$$
The global bound in (\ref{nj}) further allows us to show that
\begin{eqnarray} \label{omb}
\|\om\|_{L^\infty} <\infty,
\end{eqnarray}
which follows from the inequality
$$
\|\om(t)\|_{L^\infty} \le \|\om_0\|_{L^\infty} + \|b\|_{L^\infty_t L^\infty} \int_0^t \|\nabla j\|_{L^\infty}\,d\tau.
$$
This completes the proof of Proposition \ref{high}.
\end{proof}

\vskip .1in
Finally we prove Theorem \ref{main}.
\begin{proof}[Proof of Theorem \ref{main}] The proof of Theorem \ref{main} is divided into two main steps. The first step is to construct
a local (in time) solution while the second step extends the local solution into a global one by making use of the
global {\it a priori} bounds obtained in Proposition \ref{high}. The construction of a local solution is quite standard
and is thus omitted here. The global bounds in Proposition \ref{high} are sufficient in proving the global bound
$$
\|(u,b)\|_{H^s} < \infty.
$$
This completes the proof of Theorem \ref{main}.
\end{proof}

\vskip .4in
\section*{Acknowledgements}
Cao was partially supported by NSF grant DMS-1109022. Wu was partially supported by NSF grant DMS-1209153. Wu also
acknowledges the support of Henan Polytechnic University.
Yuan was partially supported by the National Natural
Science Foundation of China (No. 11071057) and by Innovation Scientists and Technicians Troop
Construction Projects of Henan Province (No. 104100510015).

\vskip .4in

\end{document}